\providecommand{\U}[1]{\protect\rule{.1in}{.1in}}
\newtheorem{theorem}{Theorem}[section]
\newtheorem{definition}[theorem]{Definition}
\newtheorem{remark}[theorem]{Remark}
\newtheorem{lemma}[theorem]{Lemma}
\newtheorem{example}[theorem]{Example}
\newtheorem{hypothesis}[theorem]{Hypothesis}
\numberwithin{equation}{section}
\begin{document}
\title[KdV equation]{The Inverse Scattering Transform for the KdV equation with step-like singular
Miura initial profiles}
\author{Sergei Grudsky}
\address{Departamento de Matematicas, CINVESTAV del I.P.N. Aportado Postal 14-740,
07000 Mexico, D.F., Mexico}
\email{grudsky@math.cinvestav.mx}
\author{Christian Remling}
\address{Department of Mathemics, University of Oklahoma, Norman, OK 73019}
\email{cremling@math.ou.edu}
\author{Alexei Rybkin}
\address{Department of Mathematics and Statistics, University of Alaska Fairbanks, PO
Box 756660, Fairbanks, AK 99775}
\email{arybkin@alaska.edu}
\thanks{SG is supported by PROMEP (M\'{e}xico) via "Proyecto de Redes", by CONACYT
grant 180049, and by Federal program \textquotedblleft Scientific and
Scientific-Pedagogical Personnel of Innovative Russia for the years
2007-2013\textquotedblright\ (contract No 14.A18.21.0873) }
\thanks{CR is supported in part by the NSF grant DMS 1200553 .}
\thanks{AR is supported in part by the NSF grant DMS 1009673.}
\date{November, 2014}
\subjclass{34B20, 37K15, 47B35}
\keywords{KdV equation, singular potentials, Titchmarsh-Weyl $m$-function, Hankel
operators, Miura transform.}

\begin{abstract}
We develop the inverse scattering transform for the KdV equation with real
singular initial data $q\left(  x\right)  $ of the form $q\left(  x\right)
=r^{\prime}\left(  x\right)  +r\left(  x\right)  ^{2}$, where $r\in
L_{loc}^{2}$, $r|_{\mathbb{R}_{+}}=0$. As a consequence we show that the
solution $q\left(  x,t\right)  $ is a meromorphic function with no real poles
for any $t>0$.

\end{abstract}
\maketitle
\tableofcontents

\section{Introduction}

This note is motivated by the recent progress in the spectral theory of
Schr\"{o}dinger operators with singular potentials
\cite{EckhartGesztesyetal2013} and the long lasting interest in completely
integrable systems with low regularity initial data \cite{KapPerryTopalov2005}.

We are concerned with the Cauchy problem for the Korteweg-de Vries (KdV)
equation ($x\in\mathbb{R},t>0$)%
\begin{equation}%
\begin{cases}
\partial_{t}u-6u\partial_{x}u+\partial_{x}^{3}u=0\\
u(x,0)=q(x)
\end{cases}
\label{KdV}%
\end{equation}
with the initial data $q$ satisfying (the terminology will be clarified below)

\begin{hypothesis}
\label{hyp1.1}$q\left(  x\right)  $ is a real-valued $H_{\operatorname*{loc}%
}^{-1}\left(  \mathbb{R}\right)  $ distribution subject to

\begin{enumerate}
\item[(1)] (positivity)%
\begin{equation}
\mathbb{L}_{q}\geq0 \label{Cond1}%
\end{equation}

\item[(2)] (restricted support)%
\begin{equation}
q|_{\mathbb{R}_{+}}=0. \label{Cond2}%
\end{equation}

\end{enumerate}
\end{hypothesis}

Here%
\[
H_{\operatorname*{loc}}^{-1}\left(  \mathbb{R}\right)  =\left\{  \chi
f:\chi\in C_{0}^{\infty}\left(  \mathbb{R}\right)  ,f\in H^{-1}\left(
\mathbb{R}\right)  \right\}  ,
\]
where $H^{s}\left(  \mathbb{R}\right)  $, $s\in\mathbb{R}$, is the Sobolev
space of distributions subject to $(1+\left\vert x\right\vert )^{s}%
\widehat{f}(x)\in L^{2}\left(  \mathbb{R}\right)  $ and $C_{0}^{\infty}\left(
\mathbb{R}\right)  $ is the space of compactly supported smooth functions.%
\begin{equation}
\mathbb{L}_{q}=-\partial_{x}^{2}+q(x) \label{schrodinger}%
\end{equation}
is the Schr\"{o}dinger operator on $L^{2}\left(  \mathbb{R}\right)  $
associated with the initial profile $q$ in (\ref{KdV}). As shown in
\cite{SS1999} $\mathbb{L}_{q}$ is well-defined as a selfadjoint operator for
large classes of distributional potentials $q$ form $H_{\operatorname*{loc}%
}^{-1}\left(  \mathbb{R}\right)  $. Condition (\ref{Cond1}) is understood in
the sense that $\langle\mathbb{L}_{q}\chi,\chi\rangle\geq0$ if $\chi\in
C_{0}^{\infty}\left(  \mathbb{R}\right)  $. It is one of the main results of
\cite{KapPerryTopalov2005} that Condition (\ref{Cond1}) holds if and only if%
\begin{equation}
q\left(  x\right)  =\partial_{x}r\left(  x\right)  +r\left(  x\right)
^{2}=:B\left(  r\right)  \label{Miura transform}%
\end{equation}
with some real $r\in L_{\operatorname*{loc}}^{2}\left(  \mathbb{R}\right)  $.
The transform (potential) $B\left(  r\right)  $ is referred in
\cite{KapPerryTopalov2005} to as Miura. To reflect Condition (\ref{Cond2}) we
call any $q$ subject to Hypothesis \ref{hyp1.1} a Miura steplike potential.

We emphasize from the beginning that our results can be suitably adjusted to
semiboundedness from below in Condition (\ref{Cond1}) and a certain decay
assumption at $+\infty$ in Condition (\ref{Cond2}). The numerous complications
(some of which are by no means trivial) that arise then are not of principal
nature and can all be resolved within our approach. They however seriously
aggravate the exposition. We therefore choose here transparency over completeness.

Our main goal is to develop the Inverse Scattering Transform (IST) method for
(\ref{KdV}) under Hypothesis \ref{hyp1.1}. We achieve our goal by employing
techniques of Hankel operators from \cite{GruRyPAMS13}, \cite{RybCommPDEs2013}%
-\cite{Ryb10}. The version of this approach that we use here makes our
considerations particularly transparent. More specifically, with an initial
profile subject to Hypothesis \ref{hyp1.1} we associate the Hankel operator
(see Definition \ref{def4.1}) $\mathbb{H}(\varphi_{x,t})$ with the symbol
$\varphi_{x,t}(k)=\xi_{x,t}(k)R(k)$ where%

\begin{equation}
\xi_{x,t}(k):=\exp\{i(8k^{3}t+2kx)\} \label{cubic exp}%
\end{equation}
and $R(k)$ is the reflection coefficient from the right incident (see
Definition \ref{def R}). The solution to (\ref{KdV}) is then given by%
\begin{equation}
q(x,t)=-2\partial_{x}^{2}\log\det\left(  1+\mathbb{H}(\varphi_{x,t})\right)
\label{solution}%
\end{equation}
thus establishing well-posedness of (\ref{KdV}) in the sense of Definition
\ref{WP}. Moreover, $q(x,t)$ is a meromorphic function in $x$ on the entire
complex plane for any $t>0$ with no real poles. We prove (\ref{solution}) by
first approximating our singular $q$ by $C_{0}^{\infty}$-functions for which
(\ref{solution}) is well-known and then passing to the limit. Justifying the
validly of our limiting arguments is the main issue here and it is the
techniques of Hankel operators that make it quite effortless.

Let us now put our results in the historic context. The formula
(\ref{solution}) is a derivation of the classical Dyson (also called Bargman
or log-determinant) formula (see, e.g. \cite{Dyson76}, \cite{Popper84}). For
step like (regular) potentials it appeared first in \cite{Venak86} (under
assumption that $q\left(  x\right)  $ goes to a constant at $-\infty$) and
with no restrictions on $-\infty$ in \cite{GruRyPAMS13},
\cite{RybCommPDEs2013}-\cite{Ryb10}. For singular potentials (\ref{solution})
is new. In fact, to the best of our knowledge, in the context of singular
initial data, the IST is rigorously justified for measure potentials (see e.g.
\cite{Kap1986}). On the other hand, well-posedness of (\ref{KdV}) in the
Sobolev space $H^{s}\left(  \mathbb{R}\right)  $ with negative index $s$
turned out to be an interesting problem in its own right \ having drawn
enormous attention. The sharpest result says that (\ref{KdV}) is globally
well-posed in $H^{-3/4}\left(  \mathbb{R}\right)  $ (\cite{ColKeStaTao03},
\cite{Guo09}, \cite{Tao06} and extensive literature therein). Note that the
space $H^{-3/4}\left(  \mathbb{R}\right)  $ includes such singular functions
as $\delta\left(  x\right)  $, $1/x,$ etc. However, $s=-3/4$ is the threshold
for the harmonic analytical methods commonly used in this circle of issues. On
the other hand, if one looks at the KdV as a completely integrable system the
Schr\"{o}dinger operator (\ref{schrodinger}) in the Lax pair associated with
(\ref{KdV}) remains well-defined for $s<-3/4$. In fact, the spectral (direct
and inverse) theory of Schr\"{o}dinger (Sturm-Liouville) operators with
singular potentials has independently attracted much of interest. The
systematic approach to $H_{\operatorname*{loc}}^{-1}\left(  \mathbb{R}\right)
$ potentials began with the influential paper \cite{SS1999} and has
experienced a rapid development culminating in the recent
\cite{EckhartGesztesyetal2013}, where the completeness of this theory
approaches that of the classical Titchmarsh-Weyl theory. We especially mention
the recent \cite{Hrynivetal2011} devoted to the scattering theory for
potentials $q\in B(L^{2}\left(  \mathbb{R}\right)  \cap L^{1}\left(
\mathbb{R}\right)  )$ with the future goal of developing the IST for such
initial data\footnote{To the best of our knowledge this goal has not been
realized yet.}. Our methods are completely different.

This suggests that if we use complete integrability of (\ref{KdV}) the global
well-posedness could be pushed across the threshold $s=-3/4$. It is exactly
how Kappeler-Topalov \cite{KapTop06} were able to extend well-posedness of
$H^{-1}(\mathbb{T})$ for periodic $q$'s. One might conjecture that the global
well-posedness for (\ref{KdV}) also holds far beyond $H^{-3/4}\left(
\mathbb{R}\right)  $ and could be achieved by a suitable extension of the IST
method for $\mathbb{L}_{q}$ with $q\in H^{-1}\left(  \mathbb{R}\right)  $. An
important step in this direction was done by Kappeler et al
\cite{KapPerryTopalov2005} where it was shown that (\ref{KdV}) is globally
well-posed in a certain sense if $q\in B(L^{2}\left(  \mathbb{R}\right)  )$.
Of course, $B(L^{2}\left(  \mathbb{R}\right)  )$ doesn't exhaust
$H^{-1}\left(  \mathbb{R}\right)  $ but, since $H^{-1}\left(  \mathbb{R}%
\right)  \supset H^{-3/4}\left(  \mathbb{R}\right)  $, singularity of such
solutions is pushed all the way to\footnote{As indicated in \cite{SS1999},
$\mathbb{L}_{q}$ with $q\in H^{-s}$ for $s>1$ is ill-defined.} $s=-1$.

We note that all functions in $H^{-s}(\mathbb{R})$ exhibit certain decay at
$\pm\infty$. On the other hand, there has been a significant interest in
non-decaying solutions to (\ref{KdV}) (other than periodic). The case of the
so-called steplike initial profiles (i.e. when $q(x)\rightarrow0$ sufficiently
fast as $x\rightarrow+\infty$ and $q(x)$ doesn't decay at $-\infty$) is of
physical interest and has attracted much attention since the early 70s. We
refer to the recent paper \cite{EGT09} for a comprehensive account of the
(rigorous) literature on steplike initial profiles with specified behavior at
infinity (e.g. $q$'s tending to a constant, periodic function, etc.). In the
recent \cite{GruRyPAMS13}, \cite{RybCommPDEs2013}-\cite{Ryb10} the case of
$q$'s rapidly decaying at $+\infty$ and sufficiently arbitrary at $-\infty$ is
studied in great detail. Initial steplike profiles in these papers are at
least locally integrable (i.e. regular).

The paper is organized as follows. In Section 2 we discuss the Titchmarsh-Weyl
$m$-function and reflection coefficient in the context of singular potentials.
In Section 3 we review Hankel operators and prove some results related to a
Hankel operator with a cubic oscillatory symbol. In the last Section 4 we
state and prove our main result.

\section{The Titchmarsh-Weyl $m$-function and the reflection coefficient}

It is well-known \cite{SS1999} that any $q\in H_{\operatorname*{loc}}%
^{-1}(\mathbb{R})$ can be represented as $q=\partial_{x}Q$ with some $Q\in
L_{\operatorname*{loc}}^{2}(\mathbb{R})$. We now regularize the
Schr\"{o}dinger differential expression with (formal) potential $q=\partial
_{x}Q$ by introducing the quasi-derivative%
\[
Dy=\partial_{x}y-Qy,
\]
for (locally) absolutely continuous $y$.

Following the approach of \cite{SS1999} we then introduce%
\begin{equation}%
\begin{cases}
\mathbb{L}_{q}y:=-\partial_{x}(Dy)-Q\partial_{x}y\\
\partial_{x}Q=q
\end{cases}
\label{eq3.1}%
\end{equation}
the Schr\"{o}dinger operator with a (singular) potential $q\in
H_{\operatorname*{loc}}^{-1}(\mathbb{R})$. This may be evaluated on functions
$y$ that satisfy $y,Dy\in AC$. Similarly, we regularize the Schr\"{o}dinger
equation by rewriting it as follows:
\begin{equation}
-\partial_{x}(Dy)-Q\partial_{x}y=zy. \label{eq3.0}%
\end{equation}

As proven in \cite{SS1999}, the operator \eqref{eq3.1} is well-defined.
Similarly, the classical Titchmarsh-Weyl theory can be developed for
$\mathbb{L}_{q}$ along the usual lines \cite{EckhartGesztesyetal2013}.
Essentially, one has to replace regular derivatives $\partial_{x}$ by the
quasi-derivative $D$ where appropriate.

Let us make this more explicit. Rewrite \eqref{eq3.0} as the first order
system
\begin{equation}
\partial_{x}Y=AY,\quad A=%
\begin{pmatrix}
Q & 1\\
-z-Q^{2} & -Q
\end{pmatrix}
,\ \ Y=%
\begin{pmatrix}
y\\
Dy
\end{pmatrix}
. \label{eq3.21}%
\end{equation}
Notice that $\operatorname*{tr}A=0$, so the modified Wronskian $W=y_{1}%
Dy_{2}-(Dy_{1})y_{2}$ of two solutions to the same equation is independent of
$x$. In particular, that means that the transfer matrices $T(x,z)$ associated
with \eqref{eq3.21} take values in $SL(2,\mathbb{C})$ and if $z\in\mathbb{R}$,
then $T(x,z)\in SL(2,\mathbb{R})$. Here we define $T$ as usual as the
$2\times2$ matrix solution of \eqref{eq3.21} with the initial value
$T(0,z)=1$. Moreover, if $z\in\mathbb{C}^{+}$, then $T(x,z)$ acting as a
linear fractional transformation on $w\in\mathbb{C}^{+}$ is a Herglotz
function, that is, the map $w\mapsto Tw$ maps $\mathbb{C}^{+}$ holomorphically
to itself. Recall also that this action is defined as
\[%
\begin{pmatrix}
a & b\\
c & d
\end{pmatrix}
w=\frac{aw+b}{cw+d}.
\]
To establish this Herglotz property of $w\mapsto T(x,z)w$, use the fact that
this property is equivalent to%
\[
-i(T^{\ast}JT-J)\geq0,\text{ where }J=\left(
\begin{smallmatrix}
0 & -1\\
1 & 0
\end{smallmatrix}
\right)  ;
\]
see, for example, \cite[Lemma 4.2]{Rem}. Now this latter property follows from
the fact that $-i(JA-(JA)^{\ast})\geq0$; indeed, a calculation shows that this
latter matrix equals $2\operatorname{Im}z\left(
\begin{smallmatrix}
1 & 0\\
0 & 0
\end{smallmatrix}
\right)  $.

These properties are all we need to have a Titchmarsh-Weyl type theory
available. See again \cite[Section 4]{Rem} for more on this abstract
interpretation of the theory.

We now define the Titchmarsh-Weyl $m$ function of the problem on $(-\infty
,0)$, with Dirichlet boundary conditions $y(0)=0$, as follows: For
$z\in\mathbb{C}^{+}$, let $\psi(\cdot,z)$ be the (unique, up to a factor)
solution of \eqref{eq3.0} that is square integrable near $-\infty$, and let%
\begin{equation}
m(z)=-\frac{D\psi(0,z)}{\psi(0,z)}. \label{eq4.1}%
\end{equation}
Then $m$ is a Herglotz function. Moreover, we have continuous dependence on
the potential, in the following sense.

\begin{theorem}
\label{m-funct conv}Let $Q_{n},Q\in L_{\operatorname*{loc}}^{2}(\mathbb{R})$.
Suppose that $\mathbb{L}_{q}$ is in the limit point case at $-\infty$. If
$Q_{n}\rightarrow Q$ in $L_{\operatorname*{loc}}^{2}(\mathbb{R})$, that is,
$\Vert Q_{n}-Q\Vert_{L^{2}(-R,R)}\rightarrow0$ for all $R>0$, then ${m}%
_{n}\rightarrow m$ uniformly on compact subsets of $\mathbb{C}^{+}$.
\end{theorem}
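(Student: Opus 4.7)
The plan is to reduce the statement to two ingredients: locally uniform continuous dependence of the transfer matrices $T_n(x,z)$ of \eqref{eq3.21} on the coefficient $Q$, and the classical nested disk argument from Weyl theory, which collapses the disks to the single point $m(z)$ under the limit point hypothesis on $\mathbb{L}_q$. The abstract Titchmarsh--Weyl framework of \cite[Section 4]{Rem}, applicable here because the Herglotz property of $w \mapsto T(x,z)w$ was verified just above the statement, guarantees that any admissible choice of $m_n(z)$ (any one if $\mathbb{L}_{q_n}$ happens to be limit circle at $-\infty$) lies in every Weyl disk $D_n^R(z)$ at $x=0$ obtained by truncating the problem to $(-R,0)$.

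The first step will be to show that for every $R>0$ and every compact set $K\subset\mathbb{C}^+$,
\[
\sup_{x\in[-R,0]}\sup_{z\in K}\bigl\|T_n(x,z)-T(x,z)\bigr\|\longrightarrow 0.
\]
Rewriting \eqref{eq3.21} as an integral equation and applying Gronwall's inequality, it suffices to know that $\|A_n(\cdot,z)-A(\cdot,z)\|_{L^1(-R,0)}\to 0$ uniformly in $z\in K$. The entries of $A_n-A$ involve $Q_n-Q$, which tends to $0$ in $L^2_{\mathrm{loc}}\subset L^1_{\mathrm{loc}}$, and $Q_n^2-Q^2=(Q_n-Q)(Q_n+Q)$, which tends to $0$ in $L^1_{\mathrm{loc}}$ by Cauchy--Schwarz since $\|Q_n+Q\|_{L^2(-R,0)}$ stays bounded.

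The second step is the Weyl disk construction. For fixed $R>0$, the boundary of $D_n^R(z)$ is the image of $\mathbb{R}\cup\{\infty\}$ (parametrising self--adjoint boundary conditions at $-R$) under a M\"obius transformation whose coefficients are linear in the entries of $T_n(-R,z)$, so Step~1 gives Hausdorff convergence $D_n^R(z)\to D^R(z)$ uniformly for $z\in K$. The limit point hypothesis on $\mathbb{L}_q$ asserts that the nested family $\{D^R(z)\}_{R>0}$ shrinks to $\{m(z)\}$ as $R\to\infty$, and a standard normal families argument based on the Herglotz character of the disk map (cf.\ \cite[Section 4]{Rem}) upgrades this to uniform shrinkage on compact subsets of $\mathbb{C}^+$.

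Combining the two: given $\varepsilon>0$ and compact $K\subset\mathbb{C}^+$, first choose $R$ so large that the diameter of $D^R(z)$ is less than $\varepsilon/2$ for every $z\in K$, and then $N$ so that the Hausdorff distance between $D_n^R(z)$ and $D^R(z)$ is less than $\varepsilon/2$ for $z\in K$ and $n\ge N$. Because $m_n(z)\in D_n^R(z)$ and $m(z)\in D^R(z)$, this yields $|m_n(z)-m(z)|<\varepsilon$ uniformly on $K$. I expect the transfer matrix continuity and the final triangle inequality to be routine; the point I anticipate requiring the most care is the uniform-in-$z$ shrinkage of $D^R(z)$ in the singular setting. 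However, since the nested disk picture in \cite[Section 4]{Rem} is driven entirely by the Herglotz inequality $-i(T^*JT-J)\ge 0$ already verified in the excerpt, the classical argument transfers to the $H^{-1}_{\mathrm{loc}}$ framework without modification.
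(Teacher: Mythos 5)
Your proof is correct and follows essentially the same route as the paper's: locally uniform continuous dependence of the transfer matrix of \eqref{eq3.21} on $Q$ via $L^1$ convergence of $A(Q_n)$ (you use Gronwall, the paper uses equicontinuity and a compactness argument), combined with the nested Weyl disks shrinking to $m(z)$ under the limit point hypothesis. If anything, your explicit Hausdorff convergence of the truncated disks $D_n^R(z)$ makes the uniform-in-$n$ step slightly more transparent than the paper's phrasing via the Dirichlet solution $y_L$.
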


We do not assume limit point case for the operators $\mathbb{L}_{q_{n}}$
here\footnote{In fact, under Hypothesis \ref{hyp1.1} $q_{n}$ and $q$ are limit
point case (\cite{AlKoMa JMP 10}, \cite{EckhartGesztesyetal2013}).}. If some
or all of these are in the limit circle case, then we can make an arbitrary
choice of boundary conditions at $-\infty$ in the Theorem \ref{m-funct conv}.
As the proof below will make clear, what happens far out is in fact quite irrelevant.

\begin{proof}
By our limit point assumption, $m(z)$ can be approximated locally uniformly by
$m$ functions $m_{L}$ of problems on $[-L,0]$, with Dirichlet boundary
conditions at $x=-L$ (say), if we send $L\to\infty$ here. This follows from
the fact that such $m$ functions lie in the corresponding Weyl disks
$D_{L}(z)$ whose radii go to zero as $L\to\infty$, locally uniformly on
$z\in\mathbb{C}^{+}$. This last statement can be obtained in a general version
from a normal families argument (see \cite[Theorem 4.4]{Rem} for such a
treatment), or one can use, in more classical style, an explicit formula for
the radius of the Weyl disk in terms of entries of the transfer matrix.

Now we have that $m_{L}(z)=-(Dy_{L})(0)/y_{L}(0)$, where $y_{L}$ solves
\eqref{eq3.0} and $y_{L}(-L)=0$, $(Dy_{L})(-L)=1$. We also know that
$y_{L}(0,z)$ is bounded away from zero, uniformly on compact subsets of
$\mathbb{C}^{+}$. It now suffices to show that the values $y_{L}(0),
(Dy_{L})(0)$ that are obtained by solving \eqref{eq3.21} across $[-L,0]$
depend continuously on $Q$ in the sense specified, locally uniformly in $z$.
This is done by a rather routine argument; the key feature that makes things
work is the continuous dependence of $A(Q)$ on $Q$ in the $L^{1}$ norm on
$[-L,0]$. We include a sketch of the argument for the reader's convenience.

Write \eqref{eq3.21} in integral form:%
\begin{equation}
Y_{n}(x,z)=%
\begin{pmatrix}
0\\
1
\end{pmatrix}
+\int_{-L}^{x}A(t;Q_{n}(t),z)Y_{n}(t,z)\,dt \label{eq3.22}%
\end{equation}
We want to show that $Y_{n}(0,z)\rightarrow Y(0,z)$, where $Y$ solves the same
equation for $Q$. First of all, by standard ODE theory, the $Y_{n}$ are
uniformly bounded, that is, $|Y_{n}(x,z)|\leq C$ for $n\geq1$, $-L\leq x\leq
0$, $z\in K\subset\mathbb{C}$, and here $|\cdot|$ denotes an arbitrary norm on
$\mathbb{C}^{2}$. This implies that for $s<t$\footnote{We write $y\lesssim x$
in place of $y\leq Cx$ with some $C>0$ independent of $x$.},
\[
|Y_{n}(s,z)-Y_{n}(t,z)|\lesssim\int_{s}^{t}|A(t;Q_{n}(t),z)|\,dt\rightarrow
\int_{s}^{t}|A(t;Q(t),z)|\,dt.
\]
Here we have used the crucial fact that $A(t;Q_{n})\rightarrow A(t;Q)$ in
$L^{1}(I)$ for all compact intervals $I$. The convergence on the right-hand
side is uniform in $-L\leq s\leq t\leq0$ and $z\in K$; moreover $\int_{s}%
^{t}|A(Q)|$ can be made arbitrarily small by taking $|t-s|<\epsilon$. We have
verified that $Y_{n}(\cdot,z)$ is an equicontinuous family. Thus we may pass
to a limit in \eqref{eq3.22} along a suitable subsequence. It is easy to
verify that the integrals on the right-hand side also approach the expected
limit. So, if we write $Z=\lim Y_{n_{j}}$, then we obtain that
\[
Z(x,z)=%
\begin{pmatrix}
0\\
1
\end{pmatrix}
+\int_{-L}^{x}A(t;Q(t),z)Z(t,z)\,dt.
\]
This identifies $Z=Y$ as the solution of \eqref{eq3.21} for $Q$; also, since
this is the only possible limit, it was not necessary to pass to a
subsequence. In particular, we have established that $Y_{n}(0,z)\rightarrow
Y(0,z)$, as desired.
\end{proof}

\begin{remark}
For regular potentials Theorem \ref{m-funct conv} is a folklore. For singular
$H_{\operatorname*{loc}}^{-1}(\mathbb{R})$ potentials it is new.
\end{remark}

Define now the reflection coefficient $R$ from the right incident of a
singular potential $q\in H_{\operatorname*{loc}}^{-1}(\mathbb{R})$ such that
$q|_{\mathbb{R}_{+}}=0$. Note that for such a $q$, we may alternatively
compute $m$ as
\[
m(z)=-\lim_{x\rightarrow0+}\frac{\partial_{x}\psi(x,z)}{\psi(x,z)},
\]
and we can similarly replace $D\psi$ with $\partial_{x}\psi$ in $m$ functions
of problems on $(-\infty,x_{0})$, with $x_{0}>0$.

Pick a point $x_{0}>0$ and consider a solution to $\mathbb{L}_{q}y=\lambda
^{2}y$ which is proportional to the Weyl solution on $(-\infty,x_{0})$ and is
equal to $e^{-i\lambda x}+re^{i\lambda x}$ on $(x_{0},\infty)$. From the
continuity of this solution and its derivative at $x_{0}$ one has%
\[
r(\lambda,x_{0})=e^{-2i\lambda x_{0}}\frac{i\lambda-\dfrac{\psi^{\prime}%
(x_{0},\lambda^{2})}{\psi(x_{0},\lambda^{2})}}{i\lambda+\dfrac{\psi^{\prime
}(x_{0},\lambda^{2})}{\psi(x_{0},\lambda^{2})}}.
\]

We define the right reflection coefficient by

\begin{definition}
[Reflection coefficient]\label{def R}We call%
\begin{equation}
R(\lambda)=\lim_{x_{0}\rightarrow0^{+}}r(\lambda,x_{0})=\frac{i\lambda
-m(\lambda^{2})}{i\lambda+m(\lambda^{2})}.\label{eq8.2}%
\end{equation}
the (right) reflection coefficient.
\end{definition}

Theorem \ref{m-funct conv} and Definition \ref{def R} immediately imply

\begin{theorem}
\label{props of R}Assume that $q,q_{n}$ are subject to Hypothesis
\ref{hyp1.1}. Then $R\in H^{\infty}\left(  \mathbb{C}^{+}\right)  ,$%
\[
R(-\overline{\lambda})=\overline{R(\lambda)},\ \ \left\vert R\left(
\lambda\right)  \right\vert \leq1\text{ \ for\  }\lambda\in\mathbb{C}^{+},
\]
and  ${R}_{n}\left(  \lambda\right)  \rightarrow R\left(  \lambda\right)  $ as
$n\rightarrow\infty$ uniformly on compact subsets of $\mathbb{C}^{+}$ if
$q_{n}\rightarrow q$ in $H_{\operatorname*{loc}}^{-1}(\mathbb{R})$.
\end{theorem}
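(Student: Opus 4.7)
My plan is to dispatch the Schwarz-type symmetry algebraically and then establish analyticity and the uniform bound by approximating $q$ with smooth potentials from the same class, invoking Theorem \ref{m-funct conv} for the required continuous dependence. For the symmetry $R(-\bar\lambda)=\overline{R(\lambda)}$: the Herglotz reflection $m(\bar z)=\overline{m(z)}$ applied at $z=\lambda^{2}$, combined with $(-\bar\lambda)^{2}=\overline{\lambda^{2}}$, gives $m((-\bar\lambda)^{2})=\overline{m(\lambda^{2})}$, and with $\overline{i\lambda}=-i\bar\lambda$ one reads off from \eqref{eq8.2} that replacing $\lambda$ by $-\bar\lambda$ conjugates numerator and denominator.

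For $R\in H^{\infty}(\mathbb{C}^{+})$ and $|R|\le 1$ I would approximate. Writing $q=B(r)$ with $r\in L^{2}_{\operatorname*{loc}}$ and $r|_{\mathbb{R}_{+}}=0$, I choose $r_{n}\in C_{0}^{\infty}$ supported in $(-\infty,0)$ with $r_{n}\to r$ in $L^{2}_{\operatorname*{loc}}$ (use a left-sided mollifier, so the vanishing on $\mathbb{R}_{+}$ is preserved). Then $q_{n}:=B(r_{n})\in C_{0}^{\infty}$ still satisfies Hypothesis \ref{hyp1.1}, and $q_{n}\to q$ in $H^{-1}_{\operatorname*{loc}}$ because $r_{n}'\to r'$ in $H^{-1}_{\operatorname*{loc}}$ and $r_{n}^{2}\to r^{2}$ in $L^{1}_{\operatorname*{loc}}\hookrightarrow H^{-1}_{\operatorname*{loc}}$. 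For smooth compactly supported $q_{n}$ with $\mathbb{L}_{q_{n}}\ge 0$ (hence no bound states), classical short-range scattering theory yields $R_{n}$ analytic on $\mathbb{C}^{+}$, decaying at infinity, and satisfying the unitarity relation $|R_{n}|^{2}+|T_{n}|^{2}=1$ on $\mathbb{R}$; Phragm\'en--Lindel\"of then upgrades $|R_{n}|\le 1$ from $\mathbb{R}$ to all of $\mathbb{C}^{+}$, so $R_{n}\in H^{\infty}(\mathbb{C}^{+})$ with norm at most $1$.

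To conclude, I pass to the limit. Theorem \ref{m-funct conv} gives $m_{n}\to m$ locally uniformly on $\mathbb{C}^{+}$, so the denominators $h_{n}(\lambda):=i\lambda+m_{n}(\lambda^{2})$ converge locally uniformly to $h(\lambda):=i\lambda+m(\lambda^{2})$. Each $h_{n}$ is zero-free on $\mathbb{C}^{+}$ (else $R_{n}$ would have a pole, contradicting $|R_{n}|\le 1$), and $h\not\equiv 0$ because for $\operatorname{Re}\lambda>0$ both summands lie in $\mathbb{C}^{+}$, whence $\operatorname{Im}h(\lambda)>0$. Hurwitz's theorem now forces $h$ to be zero-free throughout $\mathbb{C}^{+}$, so $R=(i\lambda-m(\lambda^{2}))/h(\lambda)$ is analytic on $\mathbb{C}^{+}$, $R_{n}\to R$ locally uniformly, and $|R|\le 1$ in the limit. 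The convergence statement in the theorem is the same argument applied to an arbitrary sequence $q_{n}\to q$ in $H^{-1}_{\operatorname*{loc}}$.

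The principal obstacle is ruling out zeros of $h$ on the imaginary axis of $\mathbb{C}^{+}$, where $m(\lambda^{2})$ is real and the half-plane sign argument fails; Hurwitz's theorem, fed by the a priori bound $|R_{n}|\le 1$ on the smooth approximants, is what does the essential work there. A minor side-issue is the mollification step, which must respect both the support condition $r|_{\mathbb{R}_{+}}=0$ and convergence in $L^{2}_{\operatorname*{loc}}$; a one-sided mollifier handles this cleanly.
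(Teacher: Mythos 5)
Your strategy is genuinely different from the paper's. The paper derives the theorem directly from formula \eqref{eq8.2} and the Herglotz property of $m$, with Theorem \ref{m-funct conv} supplying the convergence statement; you instead outsource the bound $\left\vert R_{n}\right\vert \leq1$ to classical short-range scattering theory for smooth Miura approximants and then try to pass to the limit. The symmetry computation and the one-sided mollification are fine. But the limit passage has a concrete gap, and it sits exactly at the point you yourself flag as the principal obstacle.

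Theorem \ref{m-funct conv} gives $m_{n}\rightarrow m$ locally uniformly on compact subsets of $\mathbb{C}^{+}$ in the spectral variable $z$. Under $z=\lambda^{2}$, the preimage of $\mathbb{C}^{+}_{z}$ is the open first quadrant of the $\lambda$-plane; the positive imaginary axis is sent to $(-\infty,0)$ and the second quadrant to $\mathbb{C}^{-}_{z}$. Hence the locally uniform convergence $h_{n}\rightarrow h$ that you feed into Hurwitz's theorem is established only on the open first quadrant (and, by the reflection symmetry, the second), not in any neighborhood of the positive imaginary axis. Zero-freeness off the axis does not exclude isolated zeros on the axis (compare $\lambda^{2}+1$), and the axis is precisely where bound states $\lambda=i\kappa$ would place zeros of $h$, so Hurwitz applied on the quadrants proves nothing about the poles you must exclude. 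Worse, $h(\lambda)=i\lambda+m(\lambda^{2})$ is not even defined on the axis until you show that $m$ extends analytically and real-valued across $(-\infty,0)$; this is where the positivity hypothesis \eqref{Cond1} must enter (it forces the Dirichlet half-line operator on $(-\infty,0)$ to be nonnegative, so the Herglotz measure of $m$ is supported on $[0,\infty)$). Note that locally uniform convergence of Herglotz functions on $\mathbb{C}^{+}$ does not by itself propagate to a neighborhood of an interval of reality (consider perturbations by $\epsilon_{n}\tan(nz)$), so this is not a formality. Two ways to repair the argument: (i) use the uniform support condition on the spectral measures to upgrade Theorem \ref{m-funct conv} to locally uniform convergence on $\mathbb{C}\setminus[0,\infty)$, after which your Hurwitz step is legitimate; or (ii) discard $h$, apply Montel to the family $\left\{  R_{n}\right\}  $, which is uniformly bounded by $1$ on $\mathbb{C}^{+}_{\lambda}$, identify the unique limit with \eqref{eq8.2} on the open first quadrant, where Theorem \ref{m-funct conv} applies and the denominator has positive imaginary part, and extend by the identity theorem. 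Either way positivity must be invoked explicitly to control $m$ on $(-\infty,0)$ --- something your write-up never does, whereas the paper's (admittedly unstated) argument gets $\left\vert R\right\vert \leq1$ on all of $\mathbb{C}^{+}$ directly from the contractivity of the Cayley-type transform in \eqref{eq8.2}.
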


Note that Hypothesis \ref{hyp1.1} does not rule out the case $\left\vert
R\left(  \lambda\right)  \right\vert =1$ a. e. on the real line (in the
contrast with the short range case when $\left\vert R\left(  \lambda\right)
\right\vert <1$ for $\lambda\neq0$). From the spectral point of view the
latter means that the absolutely continuous spectrum of $\mathbb{L}_{q}$ is
supported on $\mathbb{R}_{+}$ but has uniform multiplicity one (not two as in
the short range case). We conclude the section with an explicit example.

\begin{example}
Let $q(x)=c\delta(x),c>0$. The Weyl solution corresponding to $-\infty$ can be
explicitly computed by ($C\neq0$)
\[
\psi(x,\lambda^{2})=C%
\begin{cases}
e^{-i\lambda x}\quad & ,\quad x<0\\
\frac{1}{2i\lambda}\left(  ce^{i\lambda x}+(2i\lambda-c)e^{-i\lambda
x}\right)  \quad & ,\quad x>0
\end{cases}
\]
and hence by \eqref{eq4.1} and \eqref{eq8.2}
\[
m(\lambda^{2})=i\lambda-c,\ \ \ R(\lambda)=\frac{c}{2i\lambda-c}.
\]

\end{example}

\section{Hankel Operators\label{hankel}}

A Hankel operator is an infinitely dimensional analog of a Hankel matrix, a
matrix whose $(j,k)$ entry depends only on $j+k$. I.e. a matrix $\Gamma$ of
the form
\[
\Gamma=\left(
\begin{array}
[c]{cccc}%
\gamma_{1} & \gamma_{2} & \gamma_{3} & ...\\
\gamma_{2} & \gamma_{3} & ... & \\
\gamma_{3} & ... &  & \\
... &  &  & \gamma_{n}%
\end{array}
\right)  .
\]
Definitions of Hankel operators depend on specific spaces. We consider Hankel
operators on the Hardy space $H^{2}\left(  \mathbb{C}^{+}\right)  $ (cf.
\cite{Nik2002}, \cite{Peller2003}). Here, as usual (but a bit in conflict with
our notation of the Sobolev spaces) $H^{p}\left(  \mathbb{C}^{\pm}\right)  $
($0<p\leq\infty$) denotes the Hardy space of $\mathbb{C}^{\pm}$. It is
well-known (see e.g.\cite{Garnett}) that $L^{2}\left(  \mathbb{R}\right)
=H^{2}\left(  \mathbb{C}^{+}\right)  \oplus H^{2}\left(  \mathbb{C}%
^{-}\right)  ,$ the orthogonal (Riesz) projection $\mathbb{P}_{\pm}$ onto
$H^{2}\left(  \mathbb{C}^{\pm}\right)  $ being given by%
\begin{equation}
(\mathbb{P}_{\pm}f)(x)=\pm\frac{1}{2\pi i}\int\frac{f(s)ds}{s-(x\pm i0)}.
\label{proj}%
\end{equation}
Let $(\mathbb{J}f)(x)=f(-x)$ be the operator of reflection on $L^{2}$. It is
clearly an isometry and with the obvious property%
\begin{equation}
\mathbb{JP}_{\mp}=\mathbb{P}_{\pm}\mathbb{J}. \label{eq4.9}%
\end{equation}

\begin{definition}
[Hankel and Toeplitz operators]\label{def4.1}Let $\varphi\in L^{\infty}\left(
\mathbb{R}\right)  $. The operators $\mathbb{H}(\varphi)$ and $\mathbb{T}%
(\varphi)$ defined by
\begin{equation}
\mathbb{H}(\varphi)f=\mathbb{JP}_{-}\varphi f,\;\text{and }\mathbb{T}%
(\varphi)f=\mathbb{P}_{+}\varphi f,\ \ f\in H^{2}\left(  \mathbb{C}%
^{+}\right)  , \label{Def H&T}%
\end{equation}
are called respectively the Hankel and Toeplitz operators with the symbol
$\varphi$.
\end{definition}

Due to (\ref{eq4.9}), both $\mathbb{H}(\varphi)$ and $\mathbb{T}(\varphi)$ act
from $H^{2}\left(  \mathbb{C}^{+}\right)  $ to $H^{2}\left(  \mathbb{C}%
^{+}\right)  $. Note that while $\mathbb{H}(\varphi)$ and $\mathbb{T}%
(\varphi)$ look alike, they are different parts of the multiplication
operator
\begin{equation}
\varphi f=\mathbb{JH}(\varphi)f+\mathbb{T}(\varphi)f,\ \;f\in H^{2}\left(
\mathbb{C}^{+}\right)  , \label{eq4.2}%
\end{equation}
and therefore are quite different. The Toeplitz operator will play only an
auxiliary role in our consideration.

As well-know (and also obvious) that $\mathbb{H}(\varphi)$ is selfadjoint if
$\mathbb{J}\varphi=\bar{\varphi}.$

Definition \ref{def4.1} can be extended to certain unbounded symbols {(more
exactly, from BMO}) which nevertheless produce bounded Hankel operators. In
such cases we define $\mathbb{H}(\varphi)$ first on the set
\begin{equation}
\mathfrak{H}_{2}:=\left\{  f\in H^{2}\left(  \mathbb{C}^{+}\right)  :f\in
C^{\infty},\ f\left(  z\right)  =o\left(  z^{-2}\right)  ,\ z\rightarrow
\infty,\ \operatorname{Im}z\geq0\right\}  , \label{dense set in H}%
\end{equation}
dense \cite{Garnett} in $H^{2}\left(  \mathbb{C}^{+}\right)  $ by%
\begin{equation}
\mathbb{H}(\varphi)f=\mathbb{JP}_{-}\varphi f,\ \ f\in\mathfrak{H}_{2},
\label{H on dense set}%
\end{equation}
and then extend (\ref{H on dense set})\ to the whole $H^{2}\left(
\mathbb{C}^{+}\right)  $ retaining the same notation $\mathbb{H}(\varphi)$ for
the extension.

Introduce the regularized Riesz projection%
\begin{equation}
(\widetilde{\mathbb{P}}_{\pm}f)(x)=\pm\frac{1}{2\pi i}\int_{\mathbb{R}}\left(
\frac{1}{s-(x\pm i0)}-\frac{1}{s+i}\right)  f(s)\ ds,\;\ \ f\in L^{\infty
}\left(  \mathbb{R}\right)  . \label{eq3.4}%
\end{equation}
As well-known%
\begin{equation}
\widetilde{\mathbb{P}}_{\pm}f\in\text{BMOA}(\mathbb{C}^{\pm})\text{ if }\ f\in
L^{\infty}\left(  \mathbb{R}\right)  , \label{BMOA}%
\end{equation}
where BMOA$(\mathbb{C}^{\pm})$ is the class of analytic in $\mathbb{C}^{\pm}$
functions having bounded mean oscillation:%
\[
\sup_{I\in\mathbb{R}}\frac{1}{|I|}\int_{I}|f\left(  x\right)  -f_{I}%
|\ dx<\infty,\ \ \ f_{I}:=\frac{1}{|I|}\int_{I}f\left(  x\right)  \ dx,
\]
for any bounded interval $I$. One has%
\begin{equation}
\widetilde{\mathbb{P}}_{+}f+\widetilde{\mathbb{P}}_{-}f=f,\;f\in L^{\infty
}\left(  \mathbb{R}\right)  . \label{eq3.5}%
\end{equation}
The next elementary statement will play a important role in our consideration.

\begin{theorem}
\label{th4.5}If $\varphi\in L^{\infty}\left(  \mathbb{R}\right)  $ then%
\begin{equation}
\mathbb{H}(\varphi)=\mathbb{H}(\widetilde{\mathbb{P}}_{-}\varphi).
\label{eq4.5}%
\end{equation}

\end{theorem}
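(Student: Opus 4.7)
The plan is to exploit the additive decomposition $\varphi = \widetilde{\mathbb{P}}_+\varphi + \widetilde{\mathbb{P}}_-\varphi$ from \eqref{eq3.5} and show that the ``analytic part'' $\widetilde{\mathbb{P}}_+\varphi$ gives rise to the zero Hankel operator. By the linearity of the assignment $\varphi \mapsto \mathbb{H}(\varphi)$ on the dense set $\mathfrak{H}_2$, this immediately yields $\mathbb{H}(\varphi)f = \mathbb{H}(\widetilde{\mathbb{P}}_-\varphi)f$ for every $f \in \mathfrak{H}_2$. Since $\mathbb{H}(\varphi)$ is bounded ($\varphi \in L^\infty$), the bounded extension of $\mathbb{H}(\widetilde{\mathbb{P}}_-\varphi)$ from $\mathfrak{H}_2$ must then agree with it on the whole Hardy space, establishing \eqref{eq4.5}.

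To verify $\mathbb{H}(\widetilde{\mathbb{P}}_+\varphi)f = 0$ for $f \in \mathfrak{H}_2$, set $g := \widetilde{\mathbb{P}}_+\varphi$. By \eqref{BMOA}, $g$ is analytic in $\mathbb{C}^+$, while $f \in \mathfrak{H}_2$ is analytic in $\overline{\mathbb{C}^+}$ and smooth by \eqref{dense set in H}; hence the product $gf$ is analytic in $\mathbb{C}^+$ as well. Thus it suffices to show the boundary trace of $gf$ belongs to $L^2(\mathbb{R})$: then $gf \in H^2(\mathbb{C}^+)$, so $\mathbb{P}_-(gf) = 0$, and consequently $\mathbb{H}(g)f = \mathbb{J}\mathbb{P}_-(gf) = 0$.

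The technical heart is the $L^2(\mathbb{R})$ estimate for $gf$. Locally it is immediate, since by John--Nirenberg $g \in L^p$ on every bounded interval for every finite $p$, and $f$ is bounded. At infinity I would decompose $\mathbb{R}$ into dyadic shells $I_n$ of length $\sim 2^n$; the standard BMO bounds give $\|g\|_{L^2(I_n)}^2 \lesssim 2^n(1+n^2)$ (logarithmic growth of the dyadic means plus bounded local oscillation), while the strong decay $|f(z)|^2 \lesssim (1+|z|)^{-4}$ encoded in \eqref{dense set in H} contributes a factor $\lesssim 2^{-4n}$ on $I_n$, so that $\int_{I_n}|gf|^2 \lesssim 2^{-3n}(1+n^2)$, which is summable. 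The main obstacle is precisely this growth-versus-decay balance at infinity; indeed, it is the very reason $\mathfrak{H}_2$ in \eqref{dense set in H} is defined with the $o(z^{-2})$ vanishing rather than with mere $H^2$-decay. Everything else is linearity, the Cauchy analytic-product step, and a one-line density extension.
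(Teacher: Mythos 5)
Your proof is correct and follows essentially the same route as the paper's: both split $\varphi$ via \eqref{eq3.5}, show that the BMOA part $\widetilde{\mathbb{P}}_{+}\varphi$ multiplied by $f\in\mathfrak{H}_{2}$ lands in $H^{2}\left(\mathbb{C}^{+}\right)$ so that $\mathbb{P}_{-}$ annihilates it, and then extend by density; the paper's version of your dyadic estimate is the standard fact that $h(x)/(1+x^{2})\in L^{1}\left(\mathbb{R}\right)$ for BMOA $h$. The only point worth tightening is your claim that an $L^{2}$ boundary trace alone puts the analytic product in $H^{2}\left(\mathbb{C}^{+}\right)$ (false in general, e.g.\ for $e^{-iz}/(z+i)$); here it does follow from the growth control of BMOA functions inside $\mathbb{C}^{+}$ together with the uniform $o\left(z^{-2}\right)$ decay of $f$, a step the paper elides at the same level of detail.
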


\begin{proof}
As well-known \cite{Garnett} every BMOA\ function $h$ is subject to $h\left(
x\right)  /\left(  1+x^{2}\right)  \in L^{1}\left(  \mathbb{R}\right)  $ and
one can easily see that $hf\in H^{2}\left(  \mathbb{C}^{+}\right)  $ if
$f\in\mathfrak{H}_{2}$. Hence $\mathbb{P}_{-}hf=0$ and
\begin{equation}
\mathbb{H}(\varphi+h)f=\mathbb{H}(\varphi)f\text{ \ for any}\ \ f\in
\mathfrak{H}_{2}. \label{eq4.5'}%
\end{equation}
Therefore (\ref{eq4.5'}) can be closed to the whole $H^{2}\left(
\mathbb{C}^{+}\right)  $ and $\mathbb{H}(\varphi+h)$ is well-defined in the
sense discussed above, bounded and
\[
\mathbb{H}(\varphi+h)=\mathbb{H}(\varphi)
\]
holds. By (\ref{eq3.5}) $\varphi=\widetilde{\mathbb{P}}_{-}\varphi
+\widetilde{\mathbb{P}}_{+}\varphi$ and (\ref{eq4.5}) follows from
(\ref{eq4.5'}) with $h=-\widetilde{\mathbb{P}}_{+}\varphi$ which, by
(\ref{BMOA}), is in BMOA$\left(  \mathbb{C}^{+}\right)  $.
\end{proof}

Directly from Definition \ref{def4.1}, $\Vert\mathbb{H}(\varphi)\Vert\leq
\Vert\varphi\Vert_{\infty}$ but we will need much stronger statements.

Let us now introduce the Sarason algebra%
\[
H^{\infty}\left(  \mathbb{C}^{+}\right)  +C\left(  \mathbb{R}\right)
:=\{f:f=h+g,\;h\in H^{\infty}\left(  \mathbb{C}^{+}\right)  ,\;g\in C\left(
\mathbb{R}\right)  \},
\]
where%
\[
C\left(  \mathbb{R}\right)  =\left\{  f:f\text{ is continuous on }%
\mathbb{R}\text{, }\lim_{x\rightarrow\infty}f\left(  x\right)  =\lim
_{x\rightarrow-\infty}f\left(  x\right)  \neq\pm\infty\right\}  .
\]

\begin{theorem}
[Sarason, 1967]\label{sarason thm}$H^{\infty}\left(  \mathbb{C}^{+}\right)
+C\left(  \mathbb{R}\right)  $ is a closed sub-algebra of $L^{\infty}\left(
\mathbb{R}\right)  $.
\end{theorem}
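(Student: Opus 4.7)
The plan is to verify two assertions: (i) that $H^{\infty}(\mathbb{C}^{+})+C(\mathbb{R})$ is closed under pointwise multiplication, and (ii) that it is norm-closed in $L^{\infty}(\mathbb{R})$. Property (i) is essentially elementary; property (ii) is the substantive content.

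For (i), both $H^{\infty}(\mathbb{C}^{+})$ and $C(\mathbb{R})$ are pointwise algebras, so expanding $(h_{1}+g_{1})(h_{2}+g_{2})$ reduces the claim to showing $H^{\infty}(\mathbb{C}^{+})\cdot C(\mathbb{R})\subset H^{\infty}(\mathbb{C}^{+})+C(\mathbb{R})$. A Stone--Weierstrass argument on the one-point compactification $\mathbb{R}\cup\{\infty\}$ shows that the linear span of $\{1\}\cup\{(\lambda-a)^{-m}:a\in\mathbb{C}\setminus\mathbb{R},\ m\geq 1\}$ is uniformly dense in $C(\mathbb{R})$. The summands with $a\in\mathbb{C}^{-}$ already belong to $H^{\infty}(\mathbb{C}^{+})$, so the only case left is $h(\lambda)/(\lambda-a)^{m}$ with $h\in H^{\infty}$ and $a\in\mathbb{C}^{+}$. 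Subtracting the degree $(m-1)$ Taylor polynomial $P_{m-1}$ of $h$ centered at $a$ gives
\begin{equation*}
\frac{h(\lambda)}{(\lambda-a)^{m}}=\sum_{k=0}^{m-1}\frac{h^{(k)}(a)/k!}{(\lambda-a)^{m-k}}+\tilde{h}(\lambda),
\end{equation*}
where each rational summand lies in $C(\mathbb{R})$ (pole in $\mathbb{C}^{+}$, decay at infinity), while $\tilde{h}(\lambda):=[h(\lambda)-P_{m-1}(\lambda)](\lambda-a)^{-m}$ has a removable singularity at $a$ and bounded modulus on all of $\mathbb{C}^{+}$, hence lies in $H^{\infty}(\mathbb{C}^{+})$. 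Closing under uniform limits delivers (i).

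For (ii), the most efficient route is via Hartman's theorem: $\mathbb{H}(\varphi)$ is a compact operator on $H^{2}(\mathbb{C}^{+})$ if and only if $\varphi\in H^{\infty}(\mathbb{C}^{+})+C(\mathbb{R})$. Granting this, if $\varphi_{n}\in H^{\infty}+C$ and $\varphi_{n}\to\varphi$ in $L^{\infty}(\mathbb{R})$, then each $\mathbb{H}(\varphi_{n})$ is compact and $\|\mathbb{H}(\varphi_{n})-\mathbb{H}(\varphi)\|\leq\|\varphi_{n}-\varphi\|_{\infty}\to 0$; since the compact operators form a norm-closed ideal in $\mathcal{B}(H^{2}(\mathbb{C}^{+}))$, $\mathbb{H}(\varphi)$ is compact, and Hartman returns $\varphi\in H^{\infty}+C$.

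The main obstacle is (ii). The Hartman-based argument is short only because it offloads the work onto Nehari's identification $\|\mathbb{H}(\varphi)\|=\operatorname{dist}(\varphi,H^{\infty})$ together with an approximation of compact Hankel operators by finite-rank ones via polynomial approximation of continuous symbols. A self-contained proof would proceed by a quotient argument, showing directly that the image of $C(\mathbb{R})$ under the quotient $L^{\infty}\to L^{\infty}/H^{\infty}$ is closed; this reduces to a lower bound of the form $\operatorname{dist}(g,H^{\infty})\gtrsim\operatorname{dist}(g,C\cap H^{\infty})$ for $g\in C(\mathbb{R})$, which is subtle because an $H^{\infty}$ approximant of a continuous $g$ need not itself lie near the strictly smaller subalgebra $C\cap H^{\infty}$ (bounded analytic functions on $\mathbb{C}^{+}$ can oscillate wildly near $\mathbb{R}$). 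Either route works, but both ultimately depend on a nontrivial piece of Hardy-space analysis not present in (i).
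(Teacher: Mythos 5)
You should first be aware that the paper offers no proof of this statement at all: it is quoted as a classical result of Sarason and used as a black box. So the only meaningful comparison is with the standard proofs in the literature (Sarason's original argument, or the treatments in Garnett, Peller, Nikolski), and against those your proposal has a genuine gap in part (ii), which you yourself identify as "the substantive content." The Hartman-based route is circular. The forward half of Hartman's theorem (symbol in $H^{\infty}+C$ implies $\mathbb{H}(\varphi)$ compact, via finite-rank approximation) is indeed independent of Sarason's theorem; but the converse half, which is what you invoke in the phrase "Hartman returns $\varphi\in H^{\infty}+C$," is proven in every standard development by \emph{first} establishing that $H^{\infty}+C$ is closed. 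Concretely, compactness of $\mathbb{H}(\varphi)$ combined with Nehari's theorem yields $\operatorname{dist}_{L^{\infty}}(b^{n}\varphi,H^{\infty})\to0$ for $b(\lambda)=(\lambda-i)/(\lambda+i)$, which places $\varphi$ only in $\overline{H^{\infty}+C}$; the final step $\overline{H^{\infty}+C}=H^{\infty}+C$ is precisely the statement you are trying to prove. So the short route begs the question.

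You correctly name the self-contained alternative — closedness of the image of $C(\mathbb{R})$ in $L^{\infty}/H^{\infty}$ — but you stop exactly where the theorem lives and slightly miscast the difficulty. The key lemma is not merely an inequality but the identity $\operatorname{dist}_{L^{\infty}}(g,H^{\infty})=\operatorname{dist}_{L^{\infty}}(g,H^{\infty}\cap C)$ for $g\in C(\mathbb{R})$, and your worry that an $H^{\infty}$ approximant "can oscillate wildly near $\mathbb{R}$" is resolved by a one-line device: mollify a near-optimal $h\in H^{\infty}$ by the Fej\'er or Poisson means (transplanted to $\mathbb{R}$ by the Cayley transform). These means are contractive on $L^{\infty}$, map $H^{\infty}$ into $H^{\infty}\cap C$, and reproduce $g$ uniformly because $g$ is continuous on the one-point compactification; the triangle inequality then gives the identity, the induced map $C/(H^{\infty}\cap C)\to L^{\infty}/H^{\infty}$ is an isometry with closed range, and $H^{\infty}+C$ is the preimage of that range, hence closed. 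Without this paragraph the proposal establishes nothing. A secondary, repairable flaw: your part (i) ends by "closing under uniform limits," which already uses the closedness from (ii), so the logical order of your two parts is inverted; the partial-fraction and Taylor-remainder decomposition of $h(\lambda)(\lambda-a)^{-m}$ itself is correct.
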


The set $H^{\infty}+C$ is one of the most common function classes in the
theory of Hankel (and Toeplitz) operators due to the following fundamental theorem.

\begin{theorem}
[Hartman, 1958]\label{thHart}Let $\varphi\in L^{\infty}\left(  \mathbb{R}%
\right)  $. Then $\mathbb{H}(\varphi)$ is compact if and only if $\varphi\in
H^{\infty}\left(  \mathbb{C}^{+}\right)  +C\left(  \mathbb{R}\right)  $. I.e.
$\mathbb{H}(\varphi)$ is compact if and only if $\mathbb{H}(\varphi
)=\mathbb{H}(g)$ with some $g\in C\left(  \mathbb{R}\right)  $.
\end{theorem}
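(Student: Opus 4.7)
My plan is to prove the two implications separately; sufficiency is elementary while the converse is the technical heart.

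For sufficiency ($\varphi \in H^\infty(\mathbb{C}^+) + C(\mathbb{R})\Rightarrow\mathbb{H}(\varphi)$ compact), I would first observe that $\mathbb{H}(h)=0$ for every $h\in H^\infty(\mathbb{C}^+)$: if $f\in\mathfrak{H}_2$ then $hf\in H^2(\mathbb{C}^+)$, so $\mathbb{P}_-(hf)=0$, and passing to the closure in $H^2(\mathbb{C}^+)$ gives $\mathbb{H}(h)=0$ everywhere. Writing $\varphi=h+g$ with $g\in C(\mathbb{R})$ therefore reduces the task to showing $\mathbb{H}(g)$ is compact. I would approximate $g$ uniformly on $\mathbb{R}$ by rational functions $g_n$ with poles off $\mathbb{R}$ and satisfying $g_n(\pm\infty)=g(\pm\infty)$; this is supplied by Stone–Weierstrass on the one-point compactification of $\mathbb{R}$ (equivalently by Fejér's theorem after a Cayley transform). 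Partial-fraction decomposition gives $g_n=p_n+q_n$, where $p_n\in H^\infty(\mathbb{C}^+)$ (poles only in $\mathbb{C}^-$) and $q_n$ is a finite linear combination of $(x-z_j)^{-k_j}$ with $\operatorname{Im} z_j>0$. A direct calculation (or Kronecker's theorem) shows that each such elementary symbol produces a finite-rank Hankel operator, so $\mathbb{H}(g_n)=\mathbb{H}(q_n)$ is finite rank. The bound $\|\mathbb{H}(g-g_n)\|\le\|g-g_n\|_\infty\to 0$ exhibits $\mathbb{H}(g)$ as a norm limit of finite-rank operators, hence compact.

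For necessity ($\mathbb{H}(\varphi)$ compact $\Rightarrow\varphi\in H^\infty+C$) I would invoke the essential-norm refinement of Nehari's theorem,
\[
\|\mathbb{H}(\varphi)\|_{\mathrm{ess}}=\operatorname{dist}_{L^\infty(\mathbb{R})}\bigl(\varphi,\,H^\infty(\mathbb{C}^+)+C(\mathbb{R})\bigr).
\]
The inequality $\le$ is immediate from sufficiency: for any $\psi\in H^\infty+C$, $\mathbb{H}(\psi)$ is compact, so $\|\mathbb{H}(\varphi)\|_{\mathrm{ess}}=\|\mathbb{H}(\varphi-\psi)\|_{\mathrm{ess}}\le\|\mathbb{H}(\varphi-\psi)\|\le\|\varphi-\psi\|_\infty$. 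For the reverse inequality I would employ the isometric semigroup $U_t f(x)=e^{itx}f(x)$ on $H^2(\mathbb{C}^+)$; since $\widehat{e}$ is supported on $[0,\infty)$ for $e\in H^2(\mathbb{C}^+)$, Plancherel yields $\|U_t^{\ast} e\|^2=\int_t^\infty|\widehat{e}(k)|^2\,dk\to 0$, which forces $\|KU_t\|\to 0$ for every compact operator $K$. A short manipulation, based on the fact that $U_t$ asymptotically commutes with $\mathbb{P}_-$ modulo a projection corresponding to Fourier support in $[-t,0]$, extracts from $\varphi$ an element of $H^\infty+C(\mathbb{R})$ whose $L^\infty$-distance to $\varphi$ is controlled by $\|\mathbb{H}(\varphi)U_t\|$ up to vanishing terms; sending $t\to\infty$ gives $\ge$. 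With the identity established, compactness of $\mathbb{H}(\varphi)$ forces $\operatorname{dist}_{L^\infty}(\varphi,H^\infty+C)=0$, and Sarason's theorem (Theorem \ref{sarason thm}) closes $H^\infty+C$ in $L^\infty$, so $\varphi\in H^\infty+C$.

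The main obstacle is the $\ge$ direction of the essential-norm identity: morally it amounts to saying that multiplication by $e^{itx}$ asymptotically commutes with $\mathbb{P}_-$ modulo compacts, but making this quantitative and producing the explicit $H^\infty+C$ approximant to $\varphi$ out of operator-theoretic data requires careful bookkeeping with $\mathbb{P}_\pm$, $\mathbb{J}$ and $U_t$. The remainder of the argument—the partial-fraction reduction, the Stone–Weierstrass approximation, and the concluding appeal to Sarason—is routine.
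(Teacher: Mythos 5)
The paper does not actually prove this statement: it is quoted as a classical result (Hartman, 1958), with \cite{Nik2002}, \cite{Peller2003} as background, so there is no in-paper argument to compare yours with. Your sufficiency half is correct and standard: $\mathbb{H}(h)=0$ for $h\in H^{\infty}\left(  \mathbb{C}^{+}\right)  $, rational symbols with poles in $\mathbb{C}^{+}$ give finite-rank Hankel operators (Kronecker), such rational functions are uniformly dense in $C\left(  \mathbb{R}\right)  $ as defined in the paper (Stone--Weierstrass on the one-point compactification), and the bound $\Vert\mathbb{H}(\varphi)\Vert\leq\Vert\varphi\Vert_{\infty}$ closes the approximation.

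The necessity half has a genuine gap: the semigroup $U_{t}f=e^{itx}f$ is the wrong shift for $H^{\infty}\left(  \mathbb{C}^{+}\right)  +C\left(  \mathbb{R}\right)  $ on the line. Your scheme produces, from $\Vert\mathbb{H}(\varphi)U_{t}\Vert=\Vert\mathbb{H}(e^{itx}\varphi)\Vert=\operatorname{dist}_{L^{\infty}}\left(  \varphi,e^{-itx}H^{\infty}\right)  \rightarrow0$, approximants of the form $e^{-itx}h_{t}$ with $h_{t}\in H^{\infty}\left(  \mathbb{C}^{+}\right)  $, and these do \emph{not} lie in $H^{\infty}+C$: $e^{itx}$ is a singular inner function, and $\mathbb{H}(e^{-itx})$ acts isometrically on the infinite-dimensional Paley--Wiener space $H^{2}\ominus e^{itx}H^{2}$ (for $f$ there, $e^{-itx}f\in H^{2}\left(  \mathbb{C}^{-}\right)  $, so $\mathbb{P}_{-}$ does nothing to it), hence $\mathbb{H}(e^{-itx})$ is non-compact and, by your own sufficiency half, $e^{-itx}\notin H^{\infty}+C$. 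The same example kills the inequality you hope to extract: for $\varphi=e^{-isx}$ one has $\Vert\mathbb{H}(\varphi)U_{t}\Vert=0$ for all $t\geq s$, while $\operatorname{dist}\left(  \varphi,H^{\infty}+C\right)  \geq1$; so no bookkeeping with $\mathbb{P}_{\pm}$, $\mathbb{J}$, $U_{t}$ can control $\operatorname{dist}\left(  \varphi,H^{\infty}+C\right)  $ by $\lim_{t}\Vert\mathbb{H}(\varphi)U_{t}\Vert$. The repair is to replace $U_{t}$ by powers of the M\"obius inner function $b(x)=(x-i)/(x+i)$: then $\mathbb{H}(\varphi)\mathbb{T}(b)^{n}=\mathbb{H}(b^{n}\varphi)$, and $\mathbb{T}(b)^{\ast n}\rightarrow0$ strongly because $\bigcap_{n}b^{n}H^{2}=\{0\}$, so compactness gives $\Vert\varphi-\overline{b}^{\,n}h_{n}\Vert_{\infty}\rightarrow0$ with $h_{n}\in H^{\infty}$; now $\overline{b}^{\,n}\in C\left(  \mathbb{R}\right)  $ (equal limits $1$ at $\pm\infty$), so $\overline{b}^{\,n}h_{n}\in H^{\infty}+C$ by Theorem \ref{sarason thm}, and $\varphi\in H^{\infty}+C$ by closedness. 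Equivalently, transport the whole problem to the disk by the Cayley transform, where your argument with $z^{n}$ in place of $U_{t}$ is exactly the classical proof.
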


For Hankel operators appearing in completely integrable systems the membership
of the symbol in $H^{\infty}\left(  \mathbb{C}^{+}\right)  +C\left(
\mathbb{R}\right)  $ is far from being obvious. The following statement will
be crucial to our approach.

\begin{theorem}
[Grudsky, 2001]\label{thGru01}Let $p(x)$ be a real polynomial with a positive
leading coefficient such that%
\begin{equation}
p(-x)=-p(x). \label{eq5.1}%
\end{equation}
Then
\begin{equation}
e^{ip}\in H^{\infty}\left(  \mathbb{C}^{+}\right)  +C\left(  \mathbb{R}%
\right)  . \label{eq5.2}%
\end{equation}
Moreover, there exist an infinite Blaschke product $B$ and a unimodular
function $u\in C\left(  \mathbb{R}\right)  $ such that%
\begin{equation}
e^{ip}=Bu. \label{eq5.3}%
\end{equation}

\end{theorem}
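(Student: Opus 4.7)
My strategy is to construct the factorization (\ref{eq5.3}) explicitly; once it is in hand, (\ref{eq5.2}) follows at once because the Sarason algebra (Theorem \ref{sarason thm}) contains both $B\in H^\infty(\mathbb{C}^+)$ and $u\in C(\mathbb{R})$ and is closed under multiplication.

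\paragraph{Choice of zeros.} Write $\deg p=2n+1$, $n\ge 0$. Since $p$ is odd with positive leading coefficient and eventually monotone with $p(\pm\infty)=\pm\infty$, for each $k\in\mathbb{Z}$ with $|k|$ large the equation $p(x_k)=2\pi k$ has a unique real root $x_k$; oddness gives $x_{-k}=-x_k$. Put $\ell(k)=2\log(2+|k|)$, $\epsilon_k=\ell(k)/p'(x_k)$, and $z_k=x_k+i\epsilon_k\in\mathbb{C}^+$. Since $p'$ and $\ell$ are even, $\epsilon_{-k}=\epsilon_k$, so the zero set is invariant under $z\mapsto-\overline z$. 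For the leading monomial $\epsilon_k\asymp\ell(k)|k|^{-2n/(2n+1)}$, whence the Blaschke summability reduces to $\sum\ell(k)|k|^{-(2n+2)/(2n+1)}<\infty$, valid for all $n\ge 0$. Thus
\[
B(z)=\prod_k\frac{z-z_k}{z-\overline{z_k}}\in H^\infty(\mathbb{C}^+)
\]
is a well-defined infinite Blaschke product, continuous on $\mathbb{R}$ with $|B|=1$ (its zeros accumulate only at $\infty$), and $B(-x)=\overline{B(x)}$ by the zero-set symmetry.

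\paragraph{Asymptotic matching.} Lifting $\arg B$ continuously on $\mathbb{R}$, term-by-term differentiation gives
\[
\frac{d}{dx}\arg B(x)=\sum_k\frac{2\epsilon_k}{(x-x_k)^2+\epsilon_k^2},
\]
a lattice of Cauchy kernels each of mass $2\pi$. The change of variable $\xi=p(x)/(2\pi)$ sends $\{x_k\}$ to $\mathbb{Z}$ and, up to nonlinear distortion, transforms the $k$-th kernel into a Cauchy kernel in $\xi$ of width $\mu_k:=\epsilon_k p'(x_k)/(2\pi)=\ell(k)/(2\pi)\to\infty$. Poisson summation for a constant-$\mu$ lattice gives $\sum_k 2\mu/((\xi-k)^2+\mu^2)=2\pi+O(e^{-2\pi\mu})$, so locally near $\xi=k$ the discrepancy is $O(e^{-\ell(k)})=O((2+|k|)^{-2})$, and summing yields
\[
\frac{d}{dx}\arg B(x)-p'(x)\in L^1(\mathbb{R}).
\]
Moreover the count $\arg B(R)-\arg B(-R)=2\pi\cdot\#\{k:|x_k|<R\}=p(R)-p(-R)$ (the last equality by $p(x_k)=2\pi k$ and the symmetry $x_{-k}=-x_k$) forces the total integral $\int_\mathbb{R}(d\arg B/dx-p')\,dx=0$. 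Integration then produces finite, equal limits $\arg B(x)-p(x)\to c$ as $x\to\pm\infty$, so $u(x):=e^{ip(x)}/B(x)$ is continuous on $\mathbb{R}$ with $u(\pm\infty)=e^{-ic}$; hence $u\in C(\mathbb{R})$ is unimodular and $e^{ip}=Bu$ as required.

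\paragraph{Main obstacle.} The crux is the uniform Poisson-summation estimate: the Cauchy kernels transform only approximately into the idealized $\xi$-lattice (the map $x\mapsto p(x)/(2\pi)$ is nonlinear away from each $x_k$), and $\mu_k$ varies with $k$. One must control both the nonlinear distortion of each kernel outside a neighborhood of $x_k$ and the slow variation of $\mu_k$ across a lattice window, while keeping the cumulative error in $L^1(\mathbb{R})$. A windowed/localized version of Poisson summation combined with the exponential gain $e^{-2\pi\mu_k}$ should carry this through; the flexibility in the choice of $\ell(k)$—any slowly diverging even weight with $\sum e^{-\ell(k)}<\infty$ and $\sum\ell(k)|k|^{-(2n+2)/(2n+1)}<\infty$—provides the needed slack.
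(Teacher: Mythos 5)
The paper contains no proof of Theorem \ref{thGru01}: it is imported from Grudsky's work (cf.\ \cite{DybGru2002}), so I can only measure your argument against that literature. Your construction --- zeros over the level set $p(x_k)=2\pi k$, lifted into $\mathbb{C}^+$ by $\epsilon_k$ with $\mu_k=\epsilon_k p'(x_k)/2\pi\to\infty$ slowly, followed by asymptotic matching of $\arg B$ with $p$ --- is indeed the standard ``model function'' approach there; your verification of the Blaschke condition and the reduction of (\ref{eq5.2}) to (\ref{eq5.3}) via Theorem \ref{sarason thm} are correct. But the proof is incomplete exactly where the theorem is nontrivial. The claim $\frac{d}{dx}\arg B-p'\in L^1(\mathbb{R})$ is the crux, and the windowed Poisson summation you propose does not obviously deliver it. The kernels $2\epsilon_k/((x-x_k)^2+\epsilon_k^2)$ have heavy tails: at $x\approx x_j$ the total contribution of the zeros with $|k-j|>j/2$ is of relative size $\sim\log|j|/|j|$ compared to $p'(x_j)$, which is \emph{not} summable over $j$, and these terms are all positive, so no cancellation is available within the far field; one must show that the far field of the true (curved, variable-width) configuration matches that of the idealized lattice to higher order. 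Windowing fights itself here: truncating the $k$-th kernel to $|\xi-k|\le K_k$ costs $\sim\mu_k/K_k$ per window, and the two requirements $K_k\gg\mu_k$ (to keep the exponential gain) and $K_k\ll|k|$ (to control the nonlinear distortion of $p$) still leave $\sum_k\mu_k/K_k=\infty$ for every admissible choice. Likewise, freezing $\mu_k\equiv\mu_j$ over a full lattice already produces a non-summable error $\sim\log j/j$ from the variation of $\mu_k$. A genuinely different treatment of the tails (e.g.\ summation by parts against second differences, comparing the measures $\sum_k 2\pi\delta_{x_k}$ and $p'\,dx$ on all scales) is needed; this is the content of the theorem, not a routine verification.

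The ``equal limits'' step is also wrong as written. Neither identity in $\arg B(R)-\arg B(-R)=2\pi\,\#\{k:|x_k|<R\}=p(R)-p(-R)$ is exact: the first discards the arctangent tails of the factors, and the second is off by an oscillating quantity of modulus up to $2\pi$ (the fractional part of $p(R)/2\pi$ does not converge). Consequently $\int_{\mathbb{R}}\bigl(\tfrac{d}{dx}\arg B-p'\bigr)\,dx=c_+-c_-$ is some real number with no reason to lie in $2\pi\mathbb{Z}$. The symmetry of your zero set gives only $c_++c_-\in 2\pi\mathbb{Z}$, i.e.\ $u(-\infty)=\overline{u(+\infty)}$, whereas the paper's $C(\mathbb{R})$ demands $u(+\infty)=u(-\infty)$. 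This defect cannot be repaired by finite Blaschke factors or by moving finitely many zeros (any such modification changes $\int(\arg B'-p')$ by a multiple of $2\pi$); one must build a tunable global offset into the lattice, e.g.\ $p(x_{\pm k})=\pm(2\pi k+\alpha)$, and choose $\alpha$ so that $c_+-c_-\in 2\pi\mathbb{Z}$. Both issues are fixable, but as it stands the central analytic estimate is asserted rather than proved, and one of the two concluding identities is false.
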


In our case $p(\lambda)=t\lambda^{3}+x\lambda$ with real $x$ (spatial
variable) and positive $t$ (time). Note that for polynomials $p$ of even
order, Theorem \ref{thGru01} fails.

\begin{definition}
A function $f\in H^{\infty}\left(  \mathbb{C}^{+}\right)  +C\left(
\mathbb{R}\right)  $ is said invertible in $H^{\infty}\left(  \mathbb{C}%
^{+}\right)  +C\left(  \mathbb{R}\right)  $ if $1/f\in H^{\infty}\left(
\mathbb{C}^{+}\right)  +C\left(  \mathbb{R}\right)  $. Similarly, $f$ is not
invertible in $H^{\infty}\left(  \mathbb{C}^{+}\right)  +C\left(
\mathbb{R}\right)  $ if $1/f\notin H^{\infty}\left(  \mathbb{C}^{+}\right)
+C\left(  \mathbb{R}\right)  $.
\end{definition}

This concept is very important in the connection with invertibility of
Toeplitz operators, as the following theorem suggests (see, e.g.
\cite{BotSil06}, \cite{DybGru2002}).

\begin{theorem}
\label{th5.5}Let $\varphi\in H^{\infty}\left(  \mathbb{C}^{+}\right)
+C\left(  \mathbb{R}\right)  $ and $1/\varphi\in L^{\infty}\left(
\mathbb{R}\right)  $. Then%
\begin{align}
1/\varphi\notin H^{\infty}\left(  \mathbb{C}^{+}\right)  +C\left(
\mathbb{R}\right)   &  \Longrightarrow\mathbb{T}(\varphi)\text{ is
left-invertible,}\label{eq5.4}\\
1/\varphi\in H^{\infty}\left(  \mathbb{C}^{+}\right)  +C\left(  \mathbb{R}%
\right)   &  \Longrightarrow\mathbb{T}(\varphi)\;\text{is Fredholm.}
\label{eq5.5}%
\end{align}

\end{theorem}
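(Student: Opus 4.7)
The core algebraic tool is a product formula for Toeplitz operators derived directly from the splitting \eqref{eq4.2}. For $f\in H^{2}(\mathbb{C}^{+})$ and $\varphi,\psi\in L^{\infty}(\mathbb{R})$, write $\psi f=\mathbb{J}\mathbb{H}(\psi)f+\mathbb{T}(\psi)f$, multiply by $\varphi$, apply $\mathbb{P}_{+}$, and use \eqref{eq4.9} in the guise $\mathbb{P}_{+}\varphi\mathbb{J}g=\mathbb{J}\mathbb{P}_{-}(\mathbb{J}\varphi)g=\mathbb{H}(\mathbb{J}\varphi)g$. This yields
\[
\mathbb{T}(\varphi\psi)=\mathbb{T}(\varphi)\mathbb{T}(\psi)+\mathbb{H}(\mathbb{J}\varphi)\,\mathbb{H}(\psi),
\]
and, with the roles of $\varphi,\psi$ interchanged, $\mathbb{T}(\varphi\psi)=\mathbb{T}(\psi)\mathbb{T}(\varphi)+\mathbb{H}(\mathbb{J}\psi)\mathbb{H}(\varphi)$. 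Specialising $\psi=1/\varphi$ and using $\mathbb{T}(1)=I$ produces the two ``parametrix'' identities
\[
\mathbb{T}(\varphi)\mathbb{T}(1/\varphi)=I-\mathbb{H}(\mathbb{J}\varphi)\mathbb{H}(1/\varphi),\qquad \mathbb{T}(1/\varphi)\mathbb{T}(\varphi)=I-\mathbb{H}(\mathbb{J}/\varphi)\mathbb{H}(\varphi).
\]

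Since $\varphi\in H^{\infty}(\mathbb{C}^{+})+C(\mathbb{R})$, Hartman's Theorem~\ref{thHart} makes $\mathbb{H}(\varphi)$ compact, so the second identity reads $\mathbb{T}(1/\varphi)\mathbb{T}(\varphi)=I-K$ with $K$ compact. In the Fredholm case $1/\varphi\in H^{\infty}+C$, Hartman's theorem also makes $\mathbb{H}(1/\varphi)$ compact, so the first identity likewise reduces to $I$ modulo a compact operator. Hence $\mathbb{T}(1/\varphi)$ is a two-sided parametrix of $\mathbb{T}(\varphi)$ in the Calkin algebra, proving \eqref{eq5.5}.

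In case \eqref{eq5.4} only the second identity is serviceable, and it already yields left-Fredholmness of $\mathbb{T}(\varphi)$: closed range and a finite-dimensional kernel. The main obstacle is upgrading this to an honest bounded left inverse by excluding a nontrivial kernel. I would combine the positivity
\[
\mathbb{T}(\varphi)^{\ast}\mathbb{T}(\varphi)=\mathbb{T}(|\varphi|^{2})-\mathbb{H}(\mathbb{J}\bar{\varphi})\mathbb{H}(\varphi)\geq \|1/\varphi\|_{\infty}^{-2}\,I-\text{compact}
\]
(obtained from the product formula with $\psi=\bar\varphi$ and compactness of $\mathbb{H}(\varphi)$) with Coburn's alternative, which says that at most one of $\ker\mathbb{T}(\varphi)$, $\ker\mathbb{T}(\bar\varphi)$ is nonzero. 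A putative nontrivial kernel for $\mathbb{T}(\varphi)$ would, via Coburn plus the closed-range property, force $\mathbb{T}(\varphi)$ to be surjective and hence Fredholm; running a short symbol-calculus argument backwards through the first identity would then conclude that $\mathbb{H}(1/\varphi)$ is compact, i.e.\ $1/\varphi\in H^{\infty}+C$, contradicting the hypothesis. This final ``index $\Leftrightarrow$ invertibility of the symbol'' loop is the genuine technical core of the theorem and is where the Douglas algebra structure of $H^{\infty}+C$ enters (as packaged in the references \cite{BotSil06}, \cite{DybGru2002}); the algebraic Hankel--Toeplitz identities above are doing most of the visible work but cannot by themselves separate ``left semi-Fredholm'' from ``left-invertible''.
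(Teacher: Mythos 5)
The paper offers no proof of Theorem \ref{th5.5} at all --- it is quoted from \cite{BotSil06} and \cite{DybGru2002} --- so there is no internal argument to measure yours against. Your derivation of the two parametrix identities from \eqref{eq4.2} and \eqref{eq4.9} is correct, and your proof of \eqref{eq5.5} is complete: each of $\mathbb{T}(\varphi)\mathbb{T}(1/\varphi)$ and $\mathbb{T}(1/\varphi)\mathbb{T}(\varphi)$ differs from $I$ by a product of two Hankel operators in which one factor ($\mathbb{H}(1/\varphi)$, resp.\ $\mathbb{H}(\varphi)$) is compact by Hartman's Theorem \ref{thHart} and the other is bounded, so $\mathbb{T}(1/\varphi)$ is a two-sided regularizer. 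The positivity estimate via $\mathbb{T}(\varphi)^{\ast}\mathbb{T}(\varphi)$ is also correct, though it only reproduces the left semi-Fredholmness you already have.

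The gap is in \eqref{eq5.4}, precisely at the step you yourself flag as the core. Your reduction is sound up to: $\mathbb{T}(\varphi)$ has closed range and finite-dimensional kernel, and if $\ker\mathbb{T}(\varphi)\neq\{0\}$ then Coburn's lemma (applicable because $1/\varphi\in L^{\infty}$ forces $\varphi\neq0$ a.e.) makes $\mathbb{T}(\varphi)$ surjective, hence Fredholm. But the closing implication ``$\mathbb{T}(\varphi)$ Fredholm $\Rightarrow\mathbb{H}(1/\varphi)$ compact'' cannot be read off the first identity as you propose. Fredholmness plus the fact that $\mathbb{T}(1/\varphi)$ is a left regularizer only yields that $\mathbb{T}(\varphi)\mathbb{T}(1/\varphi)-I=-\mathbb{H}(\mathbb{J}\varphi)\mathbb{H}(1/\varphi)$ is compact; since $\mathbb{J}\varphi$ lies in $H^{\infty}\left(\mathbb{C}^{-}\right)+C\left(\mathbb{R}\right)$, the left factor $\mathbb{H}(\mathbb{J}\varphi)$ is merely bounded, neither compact nor bounded below, and compactness of the product does not transfer to $\mathbb{H}(1/\varphi)$. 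The implication you actually need --- for $\varphi\in H^{\infty}\left(\mathbb{C}^{+}\right)+C\left(\mathbb{R}\right)$, Fredholmness of $\mathbb{T}(\varphi)$ implies invertibility of $\varphi$ in $H^{\infty}\left(\mathbb{C}^{+}\right)+C\left(\mathbb{R}\right)$ --- is Douglas's theorem, whose proof runs through the harmonic extension (Fredholmness forces $|\hat{\varphi}|$ to be bounded below near the boundary, which by the Sarason--Douglas characterization is equivalent to invertibility in the algebra). That input is genuinely external to the Hankel--Toeplitz product formulas; as written your argument silently assumes the hard half of the invertibility theory it is trying to establish. Citing Douglas's theorem explicitly (it is in \cite{BotSil06}), or reducing to a unimodular symbol by factoring out an invertible outer function and invoking the Devinatz--Widom distance criterion, would close the proof.
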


\begin{lemma}
\label{lem5.6}Let $B$ be an infinite Blaschke product, $u\in H^{\infty}\left(
\mathbb{C}^{+}\right)  +C\left(  \mathbb{R}\right)  $ and unimodular. Then
$\varphi=Bu$ is not invertible in $H^{\infty}\left(  \mathbb{C}^{+}\right)
+C\left(  \mathbb{R}\right)  $.
\end{lemma}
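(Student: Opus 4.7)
The plan is to argue by contradiction: assume $1/\varphi \in H^\infty(\mathbb{C}^+) + C(\mathbb{R})$ and derive a contradiction from the Fredholm theory of Toeplitz operators. First, since $H^\infty(\mathbb{C}^+) + C(\mathbb{R})$ is an algebra by Theorem \ref{sarason thm} and contains $B$, multiplying $1/\varphi$ by $B$ yields $1/u = B(1/\varphi) \in H^\infty(\mathbb{C}^+) + C(\mathbb{R})$, so $u$ is itself invertible there. Both $\varphi = Bu$ and $u$ are unimodular on $\mathbb{R}$, so their reciprocals lie in $L^\infty(\mathbb{R})$; the implication (\ref{eq5.5}) of Theorem \ref{th5.5} then makes both $\mathbb{T}(\varphi)$ and $\mathbb{T}(u)$ Fredholm on $H^2(\mathbb{C}^+)$.

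The core of the argument is to move to the Calkin algebra and relate $\mathbb{T}(\varphi)$ to $\mathbb{T}(B)$. A direct computation, using that $B \in H^\infty(\mathbb{C}^+)$ multiplies $H^2(\mathbb{C}^+)$ into itself, gives for $f \in H^2(\mathbb{C}^+)$
\[
\mathbb{T}(\varphi)f - \mathbb{T}(B)\mathbb{T}(u)f = \mathbb{P}_+\bigl(B \cdot \mathbb{P}_-(uf)\bigr) = \mathbb{P}_+\bigl(B \cdot \mathbb{J}\mathbb{H}(u)f\bigr),
\]
where the last equality uses the defining identity $\mathbb{P}_- u = \mathbb{J}\mathbb{H}(u)$. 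By Hartman's Theorem \ref{thHart}, membership $u \in H^\infty(\mathbb{C}^+) + C(\mathbb{R})$ forces $\mathbb{H}(u)$ to be compact, so the above semi-commutator is compact. Consequently, modulo compacts, $[\mathbb{T}(\varphi)] = [\mathbb{T}(B)][\mathbb{T}(u)]$; since $[\mathbb{T}(\varphi)]$ and $[\mathbb{T}(u)]$ are invertible in the Calkin algebra, so is $[\mathbb{T}(B)]$, i.e.\ $\mathbb{T}(B)$ is Fredholm.

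To finish, I would derive a contradiction from the concrete structure of $\mathbb{T}(B)$: since $B \in H^\infty(\mathbb{C}^+)$, $\mathbb{T}(B)f = \mathbb{P}_+(Bf) = Bf$ is simply multiplication by $B$, which is an isometry (as $|B|=1$ a.e.\ on $\mathbb{R}$) with closed range $B\cdot H^2(\mathbb{C}^+)$. The cokernel is the model space $K_B := H^2(\mathbb{C}^+) \ominus B\cdot H^2(\mathbb{C}^+)$, whose dimension equals the number of zeros of $B$ in $\mathbb{C}^+$ counted with multiplicity; because $B$ is an infinite Blaschke product this dimension is infinite, so $\mathbb{T}(B)$ is not Fredholm, contradicting the previous paragraph. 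The only delicate step is verifying compactness of the semi-commutator $\mathbb{T}(Bu) - \mathbb{T}(B)\mathbb{T}(u)$, but once one recognises $\mathbb{P}_-(u\,\cdot) = \mathbb{J}\mathbb{H}(u)$, this reduces immediately to Hartman's theorem, so I do not expect serious obstacles beyond careful bookkeeping.
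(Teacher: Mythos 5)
Your proof is correct, and it shares the paper's overall skeleton (contradiction, Fredholmness of a Toeplitz operator built from $B$, then an infinite-dimensional kernel/cokernel coming from the zeros of $B$), but the bridge in the middle is genuinely different. The paper stays inside the function algebra: from $1/\varphi\in H^{\infty}+C$ it gets $1/B=u\cdot 1/\varphi\in H^{\infty}+C$ by Theorem \ref{sarason thm}, applies the implication (\ref{eq5.5}) a second time to the symbol $1/B=\overline{B}$ to conclude that $\mathbb{T}(\overline{B})$ is Fredholm, and then exhibits the explicit kernel vectors $f_{n}=c_{n}(x-\overline{z_{n}})^{-1}$. You instead pass to the Calkin algebra: your semi-commutator identity $\mathbb{T}(Bu)f-\mathbb{T}(B)\mathbb{T}(u)f=\mathbb{P}_{+}\bigl(B\,\mathbb{J}\mathbb{H}(u)f\bigr)$ is correct, Hartman's Theorem \ref{thHart} does make $\mathbb{H}(u)$ compact, and the factorization $[\mathbb{T}(\varphi)]=[\mathbb{T}(B)][\mathbb{T}(u)]$ together with invertibility of $[\mathbb{T}(\varphi)]$ and $[\mathbb{T}(u)]$ legitimately forces $\mathbb{T}(B)$ to be Fredholm. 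Your endgame is the adjoint of the paper's: the cokernel $K_{B}=H^{2}\ominus BH^{2}$ equals $\ker\mathbb{T}(\overline{B})$, and the paper's $f_{n}$ are precisely the reproducing kernels at the zeros $z_{n}$ that span an infinite-dimensional subspace of $K_{B}$; if you want that last step self-contained you should exhibit this spanning set rather than quote the model-space dimension fact, since that fact is exactly what the paper takes the trouble to verify. On balance, your route invokes Theorem \ref{th5.5} only for $\varphi$ and $u$ but imports Hartman's theorem and the Calkin-algebra formalism, whereas the paper's route is lighter on operator theory at the price of the extra manipulations $1/B=u/\varphi$ and $\mathbb{T}(1/B)=\mathbb{T}(\overline{B})$.
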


\begin{proof}
(By contradiction). Since $B\in H^{\infty}\left(  \mathbb{C}^{+}\right)  $,
due to the algebraic property (Theorem \ref{sarason thm})\ of $H^{\infty
}\left(  \mathbb{C}^{+}\right)  +C\left(  \mathbb{R}\right)  ,\;$one has
$\varphi\in H^{\infty}\left(  \mathbb{C}^{+}\right)  +C\left(  \mathbb{R}%
\right)  $. Assume that $\varphi$ is invertible in $H^{\infty}\left(
\mathbb{C}^{+}\right)  +C\left(  \mathbb{R}\right)  $, i.e. $1/\varphi\in
H^{\infty}\left(  \mathbb{C}^{+}\right)  +C\left(  \mathbb{R}\right)  $. Then
by (\ref{eq5.5}) $\mathbb{T}(\varphi)$ is Fredholm that forces $\mathbb{T}(B)$
to be Fredholm too. Indeed, $B\in H^{\infty}\left(  \mathbb{C}^{+}\right)  $
and, since $\varphi=Bu$,%
\[
1/B=u\cdot\,1/\varphi\in H^{\infty}\left(  \mathbb{C}^{+}\right)  +C\left(
\mathbb{R}\right)  .
\]
Thus $B$ is invertible in $H^{\infty}\left(  \mathbb{C}^{+}\right)  +C\left(
\mathbb{R}\right)  $ and (\ref{eq5.5}) holds. Hence $\mathbb{T}(\overline
{B})=\mathbb{T}(1/B)$ is also Fredholm and therefore by definition%
\begin{equation}
\dim\ker\mathbb{T}(\overline{B})<\infty. \label{eq5.6}%
\end{equation}
We now show that (\ref{eq5.6}) may not hold for $B$ with infinitely many zeros
$\{z_{k}\}$, which creates a desired contradiction. To this end consider the
Blaschke product%
\[
B(x)=\prod b_{n}(x),\;b_{n}=c_{n}\left(  \frac{x-z_{n}}{x-\overline{z_{n}}%
}\right)
\]
and set%
\[
f_{n}(x)=c_{n}(x-\overline{z_{n}})^{-1}.
\]
Clearly $f_{n}\in H^{2}\left(  \mathbb{C}^{+}\right)  $ and
\[
\mathbb{T}(\overline{B})f_{n}=\mathbb{P}_{+}\overline{B}f_{n}=\mathbb{P}%
_{+}\overline{\overline{c_{n}}(\cdot-z_{n})^{-1}B}=\mathbb{P}_{+}(\cdot
-z_{n})^{-1}\overline{B_{n}},
\]
where $B_{n}=B/b_{n}$. But $\overline{B_{n}}\in H^{\infty}\left(
\mathbb{C}^{-}\right)  $ and $(x-z_{n})^{-1}\in H^{2}\left(  \mathbb{C}%
^{-}\right)  $. Hence
\[
(x-z_{n})^{-1}\overline{B_{n}(x)}\in H^{2}\left(  \mathbb{C}^{-}\right)
\]
and
\[
\mathbb{T}(\overline{B})f_{n}=0.
\]
Therefore $f_{n}\in\ker\mathbb{T}(\overline{B})$ and the lemma is proven as
$\{f_{n}\}$ are linearly independent.
\end{proof}

Note that Lemma \ref{lem5.6} is entirely about $H^{\infty}\left(
\mathbb{C}^{+}\right)  +C\left(  \mathbb{R}\right)  $ but its proof, as often
happens in this circle of issues, relies on operator theoretical arguments.

The next important claim directly follows from Theorem \ref{thGru01} and Lemma
\ref{lem5.6}.

\begin{theorem}
\label{theorem 5.4'}Let $u$ be a unimodular function from $H^{\infty}\left(
\mathbb{C}^{+}\right)  +C\left(  \mathbb{R}\right)  $ and $e^{ip}$ as in
Theorem \ref{thGru01}. Then $e^{ip}u$ is not invertible in $H^{\infty}\left(
\mathbb{C}^{+}\right)  +C\left(  \mathbb{R}\right)  $.
\end{theorem}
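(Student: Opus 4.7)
The plan is to reduce the statement to Lemma \ref{lem5.6} by a one-line algebraic manipulation. By Theorem \ref{thGru01}, there exist an infinite Blaschke product $B$ and a unimodular function $v\in C(\mathbb{R})$ such that $e^{ip}=Bv$. Therefore
\[
e^{ip}u=B(vu),
\]
and the goal becomes showing that the factor $vu$ still satisfies the hypotheses of Lemma \ref{lem5.6}, namely that it is unimodular and belongs to $H^{\infty}(\mathbb{C}^{+})+C(\mathbb{R})$.

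Unimodularity of $vu$ is immediate: since $|v(x)|=|u(x)|=1$ a.e.\ on $\mathbb{R}$, we have $|v(x)u(x)|=1$ a.e. Membership in $H^{\infty}(\mathbb{C}^{+})+C(\mathbb{R})$ follows from Sarason's algebra theorem (Theorem \ref{sarason thm}): $v\in C(\mathbb{R})\subset H^{\infty}(\mathbb{C}^{+})+C(\mathbb{R})$, $u\in H^{\infty}(\mathbb{C}^{+})+C(\mathbb{R})$ by hypothesis, and the product of two elements of this closed sub-algebra of $L^{\infty}(\mathbb{R})$ is again in it.

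With $vu$ identified as a unimodular element of $H^{\infty}(\mathbb{C}^{+})+C(\mathbb{R})$, Lemma \ref{lem5.6} applied to the factorization $e^{ip}u=B\cdot(vu)$ yields that $e^{ip}u$ is not invertible in $H^{\infty}(\mathbb{C}^{+})+C(\mathbb{R})$. There is no genuine obstacle here — the real work was already absorbed into Theorem \ref{thGru01} (which supplies the Blaschke/continuous factorization of $e^{ip}$) and Lemma \ref{lem5.6} (whose proof exhibited infinitely many independent elements of $\ker\mathbb{T}(\overline{B})$ to defeat the Fredholm property). The only thing to notice is that the proof of Lemma \ref{lem5.6} never used that the unimodular factor was specifically $v$; any unimodular element of the Sarason algebra works, and this is precisely the flexibility we exploit.
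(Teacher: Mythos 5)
Your proposal is correct and is exactly the argument the paper intends: the paper states that the theorem ``directly follows from Theorem \ref{thGru01} and Lemma \ref{lem5.6},'' and your factorization $e^{ip}u=B(vu)$, with the observation that $vu$ is unimodular and lies in the Sarason algebra by Theorem \ref{sarason thm}, is precisely the omitted one-line verification. No further comment is needed.
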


Combining Theorems \ref{th4.4} and

\begin{theorem}
[Widom, 1960]\label{th4.4}Let $\varphi$ be unimodular. Then $\Vert
\mathbb{H}(\varphi)\Vert<1$ if and only if $\mathbb{T}(\varphi)$ is left invertible.
\end{theorem}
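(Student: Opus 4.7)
The plan is to exploit the decomposition (\ref{eq4.2}) of multiplication by $\varphi$ into its Hankel and Toeplitz parts, turn it into a Pythagorean identity, and then use unimodularity to convert the hypothesis $\|\mathbb{H}(\varphi)\|<1$ into a coercivity estimate for $\mathbb{T}(\varphi)$.

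First I would fix $f\in H^{2}(\mathbb{C}^{+})$ and rewrite (\ref{eq4.2}) as $\varphi f = \mathbb{J}\mathbb{H}(\varphi)f + \mathbb{T}(\varphi)f$. Here $\mathbb{J}\mathbb{H}(\varphi)f = \mathbb{P}_{-}(\varphi f)\in H^{2}(\mathbb{C}^{-})$ while $\mathbb{T}(\varphi)f\in H^{2}(\mathbb{C}^{+})$, so the two summands are $L^{2}(\mathbb{R})$-orthogonal. Since $\mathbb{J}$ is an isometry, Pythagoras yields
\[
\|\varphi f\|_{L^{2}}^{2} \;=\; \|\mathbb{H}(\varphi)f\|^{2} + \|\mathbb{T}(\varphi)f\|^{2}.
\]
Unimodularity of $\varphi$ gives $\|\varphi f\|_{L^{2}} = \|f\|$, so letting $f$ range over $H^{2}(\mathbb{C}^{+})$ we arrive at the operator identity
\[
\mathbb{H}(\varphi)^{*}\mathbb{H}(\varphi) + \mathbb{T}(\varphi)^{*}\mathbb{T}(\varphi) \;=\; I_{H^{2}(\mathbb{C}^{+})}.
\]

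With this identity in hand, both implications are immediate. The inequality $\|\mathbb{H}(\varphi)\|<1$ is equivalent to $\mathbb{H}(\varphi)^{*}\mathbb{H}(\varphi)\le(1-\varepsilon)I$ for some $\varepsilon>0$, which via the identity is the same as $\mathbb{T}(\varphi)^{*}\mathbb{T}(\varphi)\ge \varepsilon I$, i.e.\ the lower bound $\|\mathbb{T}(\varphi)f\|\ge \sqrt{\varepsilon}\,\|f\|$ for every $f$. On the Hilbert space $H^{2}(\mathbb{C}^{+})$ this bounded-below property is exactly injectivity plus closed range, which is equivalent to the existence of a bounded left inverse (compose the inverse on the range with the orthogonal projection onto the range). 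Conversely, any left inverse of $\mathbb{T}(\varphi)$ forces such a lower bound, and the identity then upgrades this to $\|\mathbb{H}(\varphi)\|<1$. There is no substantive obstacle here: once (\ref{eq4.2}) and unimodularity are in place, the statement reduces to the standard Hilbert-space dictionary between being bounded below and being left invertible.
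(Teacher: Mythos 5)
Your argument is correct and complete: the identity $\mathbb{H}(\varphi)^{*}\mathbb{H}(\varphi)+\mathbb{T}(\varphi)^{*}\mathbb{T}(\varphi)=I$ for unimodular $\varphi$ follows exactly as you say from (\ref{eq4.2}), the orthogonality of $H^{2}(\mathbb{C}^{+})$ and $H^{2}(\mathbb{C}^{-})$, and the fact that $\mathbb{J}$ is an isometry, and the equivalence of ``bounded below'' with ``left invertible'' on a Hilbert space finishes both directions. Note that the paper states Theorem \ref{th4.4} as a classical result of Widom and gives no proof at all, so there is nothing to compare against; your derivation is the standard textbook proof of this fact and would serve perfectly well if the authors had chosen to include one.
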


yields

\begin{theorem}
\label{th5.7}If $\varphi\in H^{\infty}\left(  \mathbb{C}^{+}\right)  +C\left(
\mathbb{R}\right)  $ and unimodular but not invertible then%
\begin{equation}
\Vert\mathbb{H}(\varphi)\Vert<1. \label{eq5.7}%
\end{equation}

\end{theorem}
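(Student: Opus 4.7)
The plan is to chain together the two theorems immediately preceding the statement, namely Theorem \ref{th5.5} (characterization of left-invertibility/Fredholmness of Toeplitz operators via invertibility of the symbol in the Sarason algebra) and Widom's theorem (Theorem \ref{th4.4}), which translates the operator-norm strict inequality $\|\mathbb{H}(\varphi)\|<1$ into left-invertibility of the companion Toeplitz operator $\mathbb{T}(\varphi)$.

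The steps in order would be as follows. First, I would observe that since $\varphi$ is unimodular, one has $1/\varphi=\overline{\varphi}$ pointwise a.e.\ on $\mathbb{R}$, so in particular $1/\varphi\in L^{\infty}(\mathbb{R})$ with $\|1/\varphi\|_{\infty}=1$. This places us squarely in the setting where Theorem \ref{th5.5} is applicable. Second, the hypothesis that $\varphi$ is not invertible in $H^{\infty}(\mathbb{C}^{+})+C(\mathbb{R})$ means exactly that $1/\varphi\notin H^{\infty}(\mathbb{C}^{+})+C(\mathbb{R})$. Thus implication (\ref{eq5.4}) of Theorem \ref{th5.5} applies and yields that $\mathbb{T}(\varphi)$ is left-invertible.

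Third, since $\varphi$ is unimodular, Widom's theorem (Theorem \ref{th4.4}) gives the equivalence of $\|\mathbb{H}(\varphi)\|<1$ with left-invertibility of $\mathbb{T}(\varphi)$. Combining with the previous step, we conclude $\|\mathbb{H}(\varphi)\|<1$, which is (\ref{eq5.7}).

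There is no real obstacle here: the content of the theorem is already packaged in the two cited results, and the role of the hypothesis ``not invertible'' is precisely to feed the left (rather than right) implication of Theorem \ref{th5.5}. The one tiny point to verify explicitly is that the ``not invertible'' hypothesis in the sense of the Sarason algebra is compatible with $1/\varphi \in L^{\infty}$, which is automatic from unimodularity. The deeper facts (Sarason's algebra theorem, Hartman's theorem, and the Widom and Devinatz--Douglas-type criteria behind Theorem \ref{th5.5}) have already been invoked upstream, so the proof itself reduces to a two-line chain of implications.
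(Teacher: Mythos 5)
Your proposal is correct and follows exactly the paper's own two-line argument: apply implication (\ref{eq5.4}) of Theorem \ref{th5.5} to get left-invertibility of $\mathbb{T}(\varphi)$, then invoke Widom's theorem (Theorem \ref{th4.4}). Your explicit check that unimodularity guarantees $1/\varphi=\overline{\varphi}\in L^{\infty}(\mathbb{R})$ is a small detail the paper leaves implicit, but otherwise the routes are identical.
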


\begin{proof}
By Theorem \ref{th5.5}, $\mathbb{T}(\varphi)$ is left-invertible. By Theorem
\ref{th4.4} we have (\ref{eq5.7}).
\end{proof}

While an immediate consequence of Theorems \ref{th5.7} and \ref{thGru01}, the
following theorem is vital to our approach.

\begin{theorem}
\label{Thm 5.6'}If $u\in H^{\infty}\left(  \mathbb{C}^{+}\right)  +C\left(
\mathbb{R}\right)  $, $\left\vert u\right\vert =1$, and $p$ is as in Theorem
\ref{thGru01}, then%
\[
\Vert\mathbb{H}(e^{ip}u)\Vert<1.
\]

\end{theorem}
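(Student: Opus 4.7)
The plan is to reduce the statement to a short chain of the earlier results, treating $\varphi := e^{ip}u$ as a unimodular element of the Sarason algebra $H^{\infty}(\mathbb{C}^+)+C(\mathbb{R})$ that fails to be invertible there.

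First I would verify that $\varphi \in H^{\infty}(\mathbb{C}^+)+C(\mathbb{R})$ and that $|\varphi|=1$. The second is immediate: $|e^{ip}|=1$ on $\mathbb{R}$ since $p$ is real, and $|u|=1$ by hypothesis. For the first, Theorem \ref{thGru01} gives $e^{ip}\in H^{\infty}(\mathbb{C}^+)+C(\mathbb{R})$, while $u$ lies there by assumption; since $H^{\infty}(\mathbb{C}^+)+C(\mathbb{R})$ is a (closed) sub-algebra of $L^{\infty}(\mathbb{R})$ by Sarason's Theorem \ref{sarason thm}, the product $e^{ip}u$ is again in this algebra.

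Next I would show that $\varphi$ is not invertible in $H^{\infty}(\mathbb{C}^+)+C(\mathbb{R})$. This is precisely the content of Theorem \ref{theorem 5.4'}, which, as already noted in the excerpt, follows by combining the factorization $e^{ip}=Bu_0$ from Theorem \ref{thGru01} (with $B$ an infinite Blaschke product and $u_0$ unimodular in $C(\mathbb{R}) \subset H^{\infty}(\mathbb{C}^+)+C(\mathbb{R})$) with Lemma \ref{lem5.6}: writing $\varphi = B(u_0 u)$ and noting that $u_0 u$ is unimodular and in $H^{\infty}(\mathbb{C}^+)+C(\mathbb{R})$ (again by the algebra property), Lemma \ref{lem5.6} forbids $1/\varphi$ from lying in $H^{\infty}(\mathbb{C}^+)+C(\mathbb{R})$.

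Finally I would apply Theorem \ref{th5.7} directly to $\varphi$: it is a unimodular element of $H^{\infty}(\mathbb{C}^+)+C(\mathbb{R})$ that is not invertible in this algebra, and therefore $\|\mathbb{H}(\varphi)\|<1$. Since every ingredient is already in place, there is no genuine obstacle in the proof — the only point requiring any care is the verification that both $e^{ip}$ and $u$ live in the Sarason algebra so that the algebra structure and Theorem \ref{theorem 5.4'} can actually be invoked on their product.
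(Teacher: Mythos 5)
Your proposal is correct and follows exactly the route the paper intends: the paper presents Theorem \ref{Thm 5.6'} as an immediate consequence of Theorems \ref{th5.7} and \ref{thGru01}, with non-invertibility of $e^{ip}u$ supplied by Theorem \ref{theorem 5.4'} (i.e., the factorization $e^{ip}=Bu_{0}$ plus Lemma \ref{lem5.6}), which is precisely your chain of reasoning. Your explicit verification that $e^{ip}u$ lies in the Sarason algebra via Theorem \ref{sarason thm} is a detail the paper leaves implicit but adds nothing new in substance.
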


\begin{theorem}
\label{rem5.8} If $\varphi\in H^{\infty}\left(  \mathbb{C}^{+}\right)
+C\left(  \mathbb{R}\right)  $
is not unimodular but $\Vert\varphi\Vert_{\infty}\leq1$ and $\mathbb{J}%
\varphi=\bar{\varphi}$ then (\ref{eq5.7}) holds.
\end{theorem}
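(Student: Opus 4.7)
The plan is to argue by contradiction, combining the compactness given by Hartman's Theorem~\ref{thHart} with the orthogonal decomposition~(\ref{eq4.2}). Since $\varphi\in H^{\infty}(\mathbb{C}^{+})+C(\mathbb{R})$, Theorem~\ref{thHart} tells us that $\mathbb{H}(\varphi)$ is compact, and the hypothesis $\mathbb{J}\varphi=\bar{\varphi}$ makes it self-adjoint. The obvious bound from Definition~\ref{def4.1} gives $\Vert\mathbb{H}(\varphi)\Vert\leq\Vert\varphi\Vert_{\infty}\leq 1$, so the task is only to rule out equality.

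Suppose, for contradiction, that $\Vert\mathbb{H}(\varphi)\Vert=1$. Because $\mathbb{H}(\varphi)$ is a compact self-adjoint operator, the norm is attained at an eigenvector: there exists $f\in H^{2}(\mathbb{C}^{+})$ with $\Vert f\Vert=1$ and $\mathbb{H}(\varphi)f=\pm f$. Applying~(\ref{eq4.2}) and using that $\mathbb{J}$ is an isometry mapping $H^{2}(\mathbb{C}^{-})$ onto $H^{2}(\mathbb{C}^{+})$ so that $\mathbb{J}\mathbb{H}(\varphi)f$ and $\mathbb{T}(\varphi)f$ lie in orthogonal subspaces of $L^{2}(\mathbb{R})$, one gets
\[
\int_{\mathbb{R}}|\varphi(x)|^{2}|f(x)|^{2}\,dx=\Vert\varphi f\Vert^{2}=\Vert\mathbb{H}(\varphi)f\Vert^{2}+\Vert\mathbb{T}(\varphi)f\Vert^{2}\geq 1.
\]
On the other hand $\Vert\varphi f\Vert^{2}\leq\Vert\varphi\Vert_{\infty}^{2}\leq 1$, so equality must hold throughout. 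In particular
\[
\int_{\mathbb{R}}\bigl(1-|\varphi(x)|^{2}\bigr)|f(x)|^{2}\,dx=0.
\]

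Now the assumption that $\varphi$ is not unimodular, together with $\Vert\varphi\Vert_{\infty}\leq 1$, means that the set $E=\{x\in\mathbb{R}:|\varphi(x)|<1\}$ has positive Lebesgue measure. The displayed identity forces $f=0$ a.e.\ on $E$. But a non-zero function in $H^{2}(\mathbb{C}^{+})$ cannot vanish on a set of positive measure (by the standard inner-outer factorization, or equivalently by the F.~and~M.~Riesz theorem applied to $f$), contradicting $\Vert f\Vert=1$. Hence $\Vert\mathbb{H}(\varphi)\Vert<1$, as claimed.

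The main (and essentially only) subtlety is step three, where compactness plus self-adjointness is used to upgrade the supremum in the definition of the operator norm to an attained eigenvalue; once that is in place, the orthogonal decomposition~(\ref{eq4.2}) and the rigidity of $H^{2}$ boundary values dispatch the remaining analysis without further effort.
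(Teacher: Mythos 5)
Your proof is correct, and it reaches the same contradiction as the paper but by a different computation. Both arguments begin identically: assume $\Vert\mathbb{H}(\varphi)\Vert=1$, use compactness (Hartman) plus self-adjointness to produce a normalized eigenfunction $f$ with eigenvalue $\pm1$, and finish by invoking the fact that a nonzero $H^{2}(\mathbb{C}^{+})$ function cannot vanish on a set of positive measure. Where you diverge is in how the eigenfunction is forced to concentrate on $\{|\varphi|=1\}$: the paper writes $\langle\mathbb{H}(\varphi)f,f\rangle=\langle\varphi f,\mathbb{J}f\rangle$ and applies the Cauchy--Schwarz inequality to get $1=|\langle\mathbb{H}(\varphi)f,f\rangle|^{2}\leq\int|\varphi|\,|f|^{2}$, whereas you use the orthogonal splitting $\varphi f=\mathbb{J}\mathbb{H}(\varphi)f+\mathbb{T}(\varphi)f$ from (\ref{eq4.2}) and Pythagoras to get $\int|\varphi|^{2}|f|^{2}=\Vert\mathbb{H}(\varphi)f\Vert^{2}+\Vert\mathbb{T}(\varphi)f\Vert^{2}\geq1$. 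Your route is arguably cleaner: it rests on the standard identity relating $\Vert\varphi f\Vert$ to the Hankel and Toeplitz parts, avoids introducing $f(-x)$, and yields the sharper byproduct $\mathbb{T}(\varphi)f=0$ in the equality case; the paper's route needs only the weaker inner-product estimate and the self-adjointness in the form $\langle\mathbb{H}(\varphi)f,f\rangle=\pm1$. Both are equally rigorous, and your identification of the one genuine subtlety --- that compactness is what upgrades the supremum to an attained eigenvalue --- matches the role compactness plays in the paper's argument.
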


\begin{proof}
(By contradiction) Assume that $\Vert\mathbb{H}(\varphi)\Vert=1$. Since
$\mathbb{H}\left(  \varphi\right)  $ is selfadjoint and compact (by Theorems
\ref{thGru01} and \ref{thHart}), $\mathbb{H}(\varphi)$ has a unimodular
eigenvalue $\lambda$ ($\lambda=\pm1$). For the associated normalized
eigenfunction $f\in H^{2}\left(  \mathbb{C}^{+}\right)  $ we have by
(\ref{eq4.9})%
\[
\left\langle \mathbb{H}(\varphi)f,f\right\rangle =\left\langle \varphi
f,\mathbb{P}_{-}\mathbb{J}f\right\rangle =\left\langle \varphi f,\mathbb{J}%
f\right\rangle
\]
and hence by the Cauchy inequality%
\begin{align}
\left\vert \left\langle \mathbb{H}(\varphi)f,f\right\rangle \right\vert ^{2}
&  \leq\left(  \int_{\mathbb{R}}\left\vert \varphi\left(  x\right)  f\left(
x\right)  f\left(  -x\right)  \right\vert \ dx\right)  ^{2}\nonumber\\
&  \leq\int_{\mathbb{R}}\left\vert \varphi\left(  x\right)  \right\vert
\left\vert f\left(  x\right)  \right\vert ^{2}dx\ \int_{\mathbb{R}}\left\vert
f\left(  -x\right)  \right\vert ^{2}\ dx\nonumber\\
&  =\int_{\mathbb{R}}\left\vert \varphi\left(  x\right)  \right\vert
\left\vert f\left(  x\right)  \right\vert ^{2}\ dx\nonumber\\
&  =\int_{S}\left\vert \varphi\left(  x\right)  \right\vert \left\vert
f\left(  x\right)  \right\vert ^{2}\ dx+\int_{\mathbb{R}\diagdown S}\left\vert
\varphi\left(  x\right)  \right\vert \left\vert f\left(  x\right)  \right\vert
^{2}\ dx\label{eigen}\\
&  <\int_{\mathbb{R}}\left\vert \varphi\left(  x\right)  \right\vert
\left\vert f\left(  x\right)  \right\vert ^{2}\ dx=1.\nonumber
\end{align}
Here $S$ is a set of positive Lebesgue measure where $\left\vert
\varphi\left(  x\right)  \right\vert <1$ a.e. Here we have used the fact that
$f\in H^{2}\left(  \mathbb{C}^{+}\right)  $ and hence cannot vanish on $S$.
The inequality (\ref{eigen}) implies that $\left\vert \lambda\right\vert <1$
which is a contradiction.
\end{proof}

Finally, we note that the Hankel operator can also be defined as an integral
operator on $L^{2}(\mathbb{R}_{+})$ whose kernel depends on the sum of the
arguments%
\begin{equation}
(\mathbb{H}f)(x)=\int_{\mathbb{R}_{+}}h(x+y)f(y)dy,\;f\in L^{2}(\mathbb{R}%
_{+}),\;x\geq0 \label{eq4.10}%
\end{equation}
and it is this form that Hankel operators typically appear in the inverse
scattering formalism. One can show that the Hankel operator $\mathbb{H}$
defined by (\ref{eq4.10}) is unitary equivalent to $\mathbb{H}(\varphi)$ with
the symbol $\varphi$ equal to the Fourier transform of $h$. We emphasize
though that the form (\ref{eq4.10}) does not prove to be convenient for our
purposes and also $h$ is in general not a function but a distribution.

\section{The IST\ Hankel Operator\label{Our hankel}}

For a reason which will become clear in the next section we introduce

\begin{definition}
[IST Hankel operator]\label{def IST HO} Assume that initial data $q$ is
subject to Hypothesis \ref{hyp1.1}. Let $R$ be as in Definition \ref{def R}.
We call the Hankel operator
\[
\mathbb{H}(x,t):=\mathbb{H}(\varphi_{x,t}),
\]
with the symbol
\begin{equation}
\varphi_{x,t}(k)=\xi_{x,t}(k)R(k), \label{eq9.9}%
\end{equation}
the IST Hankel operator associated with $q$.
\end{definition}

We need some general statements on singular numbers of Hankel operators. We
recall that the $n$-th singular value $s_{n}\left(  \mathbb{A}\right)  $ of a
compact Hilbert space operator $\mathbb{A}$ is defined as the $n$-th
eigenvalue of the operator $\left(  \mathbb{A}^{\ast}\mathbb{A}\right)
^{1/2}$.

The following theorems are fundamental in the study of singular numbers of
Hankel operators.

\begin{theorem}
[Adamyan-Arov-Krein, 1971]\label{AAK} Let $\varphi\in L^{\infty}\left(
\mathbb{R}\right)  $. Then
\[
s_{n}\left(  \mathbb{H}\left(  \varphi\right)  \right)  =\operatorname*{dist}%
\nolimits_{L^{\infty}}\left(  \varphi,\mathcal{R}_{n}+H^{\infty}\left(
\mathbb{C}^{+}\right)  \right)  ,
\]
where $\mathcal{R}_{n\text{ }}$is the set of rational functions bounded at
infinity with all poles in $\mathbb{C}^{+}$ of total multiplicity $\leq n$.
\end{theorem}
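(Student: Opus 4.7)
The plan is to establish the two inequalities separately; only the lower bound is deep. Throughout I will use $\mathbb{H}(h)=0$ for every $h\in H^{\infty}(\mathbb{C}^{+})$ (immediate from Definition \ref{def4.1}, since then $hf\in H^{2}(\mathbb{C}^{+})$ and $\mathbb{P}_{-}hf=0$), the linearity of $\varphi\mapsto\mathbb{H}(\varphi)$, and the Schmidt--Mirsky characterization
\[
s_{n}(\mathbb{A})=\inf\bigl\{\Vert\mathbb{A}-K\Vert:\operatorname{rank}K\leq n\bigr\},
\]
consistent with the convention $s_{0}(\mathbb{A})=\Vert\mathbb{A}\Vert$ (so $n=0$ recovers Nehari's theorem).

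For the easy direction $s_{n}(\mathbb{H}(\varphi))\leq\operatorname{dist}_{L^{\infty}}(\varphi,\mathcal{R}_{n}+H^{\infty})$ I would first show $\operatorname{rank}\mathbb{H}(r)\leq n$ for every $r\in\mathcal{R}_{n}$. Partial fractions reduce $r$, modulo a constant absorbed into $H^{\infty}$, to a sum of elementary terms $c_{j,\nu}(k-z_{j})^{-\nu}$ with $z_{j}\in\mathbb{C}^{+}$ of total multiplicity $\leq n$; the projection formula (\ref{proj}) applied to $f\in\mathfrak{H}_{2}$ together with a residue computation at $z_{j}$ shows that $\mathbb{P}_{-}\bigl(f(k)(k-z_{j})^{-\nu}\bigr)$ lies in the span of $\{(k-z_{j})^{-l}\}_{l=1}^{\nu}$, so each summand contributes rank $\leq\nu$ and the total rank of $\mathbb{H}(r)$ is at most $n$. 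Hence
\[
s_{n}(\mathbb{H}(\varphi))\leq\Vert\mathbb{H}(\varphi)-\mathbb{H}(r)\Vert=\Vert\mathbb{H}(\varphi-r-h)\Vert\leq\Vert\varphi-r-h\Vert_{\infty},
\]
and taking the infimum over $r\in\mathcal{R}_{n}$, $h\in H^{\infty}$ closes this side.

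The reverse inequality is the genuine content of AAK. Set $s:=s_{n}(\mathbb{H}(\varphi))$ and select a Schmidt pair at level $s$, i.e.\ nonzero $\xi,\eta\in H^{2}(\mathbb{C}^{+})$ with $\mathbb{H}(\varphi)\xi=s\eta$ and $\mathbb{H}(\varphi)^{\ast}\eta=s\xi$. When $s$ is attained as an eigenvalue of $|\mathbb{H}(\varphi)|$ this is an honest eigenpair; otherwise one perturbs $\varphi$ slightly, takes the pair at a nearby attained level and passes to a weak limit. The AAK ansatz is
\[
\psi:=\varphi-s\,\frac{\mathbb{J}\eta}{\xi},
\]
and the task splits into (i) $\psi\in L^{\infty}$ with $\Vert\psi\Vert_{\infty}\leq s$ and (ii) $\varphi-\psi\in\mathcal{R}_{n}+H^{\infty}$. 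Part (i) is checked pointwise a.e.: multiplying by $\xi$ and using $\mathbb{P}_{-}\varphi\xi=\mathbb{J}(\mathbb{H}(\varphi)\xi)=s\mathbb{J}\eta$ with its companion for $\eta$ reduces the estimate to a Cauchy--Schwarz bookkeeping essentially identical to the chain (\ref{eigen}) in the proof of Theorem \ref{rem5.8}.

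The main obstacle is (ii), the pole count of $\mathbb{J}\eta/\xi$ in $\mathbb{C}^{+}$. This is where AAK genuinely transcends Nehari ($n=0$, in which case any meromorphic behavior of $\mathbb{J}\eta/\xi$ is absorbed into $H^{\infty}$). The standard route uses the inner--outer factorization $\xi=\xi_{i}\xi_{o}$: one shows $\eta$ shares the outer factor of $\xi$, so that $\mathbb{J}\eta/\xi$ reduces, modulo $H^{\infty}$, to an inner function divided by $\xi_{i}$; the total multiplicity of zeros of $\xi_{i}$ in $\mathbb{C}^{+}$ is then bounded by $n$ via an orthogonality argument in which Schmidt pairs attached to singular values strictly larger than $s$ span a subspace of $H^{2}(\mathbb{C}^{+})$ of dimension at most $n$. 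A cleaner alternative is to pass to the disk via the Cayley transform and run the Hahn--Banach duality proof of AAK for $H^{2}(\mathbb{D})$: separate the compact operator $\mathbb{H}(\varphi)$ from the (non-closed but convex) set of rank-$\leq n$ operators and identify the separating functional with a symbol in $\mathcal{R}_{n}+H^{\infty}$ whose $L^{\infty}$-norm equals $s$.
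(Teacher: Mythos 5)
First, a point of context: the paper does not prove Theorem \ref{AAK} at all. It is quoted, with attribution to Adamyan--Arov--Krein (1971), and used as a black box together with Jackson's theorem to obtain Lemma \ref{prop on sn}. So there is no proof in the paper to compare yours against; you are reconstructing a genuinely deep classical theorem. Your easy direction is correct and essentially complete: $\mathbb{H}(h)=0$ for $h\in H^{\infty}\left(  \mathbb{C}^{+}\right)$, the Kronecker-type bound $\operatorname{rank}\mathbb{H}(r)\leq n$ for $r\in\mathcal{R}_{n}$ via partial fractions and the residue computation with \eqref{proj} on $\mathfrak{H}_{2}$, and the Schmidt--Mirsky formula give $s_{n}(\mathbb{H}(\varphi))\leq\operatorname{dist}_{L^{\infty}}(\varphi,\mathcal{R}_{n}+H^{\infty}(\mathbb{C}^{+}))$. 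Your explicit adoption of the $0$-based indexing $s_{0}=\Vert\mathbb{H}\Vert$ is also the right reading of the paper's ambiguous definition of $s_{n}$; the theorem as stated is false under $1$-based indexing.

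In the hard direction, however, you have interchanged the roles of the approximant and the error, and as written both of your claims (i) and (ii) fail. The Schmidt relation gives $\mathbb{P}_{-}(\varphi\xi)=s\,\mathbb{J}\eta$, hence $\psi:=\varphi-s\,\mathbb{J}\eta/\xi=\mathbb{P}_{+}(\varphi\xi)/\xi$. This $\psi$ is the candidate \emph{approximant}: the two things to prove are that the \emph{error} $\varphi-\psi=s\,\mathbb{J}\eta/\xi$ has modulus $s$ a.e.\ (via the AAK lemma $|\mathbb{J}\eta|=|\xi|$ a.e.), and that $\psi$ itself lies in $\mathcal{R}_{n}+H^{\infty}\left(  \mathbb{C}^{+}\right)$, i.e.\ that the inner factor of $\xi$ has at most $n$ zeros. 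Your (i), $\Vert\psi\Vert_{\infty}\leq s$, cannot hold in general: the best approximant has norm comparable to $\Vert\varphi\Vert_{\infty}$, not to $s$ (take $\varphi$ to be a large $H^{\infty}$ function plus a small co-analytic perturbation). Your (ii), $s\,\mathbb{J}\eta/\xi\in\mathcal{R}_{n}+H^{\infty}$, already fails for $n=0$: there the error $\varphi-g$ is a unimodular multiple of $\Vert\mathbb{H}(\varphi)\Vert$ and lies in $H^{\infty}$ only if $\varphi$ does. Note also that $\mathbb{J}\eta/\xi$ is not a meromorphic function on $\mathbb{C}^{+}$ (its numerator lives in $H^{2}\left(  \mathbb{C}^{-}\right)$), so "its pole count in $\mathbb{C}^{+}$" is not the right object; the poles that must be counted are those of $\mathbb{P}_{+}(\varphi\xi)/\xi$, located at the zeros of the inner part of $\xi$. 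If you simply redefine $\psi:=s\,\mathbb{J}\eta/\xi$ your (i)/(ii) become the correct pair of claims, but even then the two genuinely hard steps --- the dimension count tying the zeros of $\xi_{i}$ to Schmidt vectors of singular values exceeding $s$, and the non-attained case (where "perturb and pass to a weak limit" risks producing the zero pair) --- remain at the level of pointers. Since the paper itself treats this as a citation, the appropriate move here is to cite \cite{Peller2003} rather than to reprove the theorem.
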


\begin{theorem}
[Jackson, 1910]\label{Jackson}Let $\varphi\in C^{m}\left(  \mathbb{R}\right)
$. Then%
\[
\operatorname*{dist}\nolimits_{L^{\infty}}\left(  \varphi,\mathcal{R}%
_{n}+H^{\infty}\left(  \mathbb{C}^{+}\right)  \right)  \lesssim\left\Vert
\varphi^{\left(  m\right)  }\right\Vert _{\infty}/n^{m}.
\]

\end{theorem}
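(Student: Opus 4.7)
The statement is a Jackson-type approximation rate for the class $\mathcal{R}_{n}+H^{\infty}(\mathbb{C}^{+})$, so my plan is to transfer the problem to the unit disk via the Cayley map and reduce to the classical trigonometric Jackson theorem on $\mathbb{T}$. Under $\omega(\lambda)=(\lambda-i)/(\lambda+i)$, the half-plane $\mathbb{C}^{+}$ is carried conformally to $\mathbb{D}$ and the real line to the unit circle as an $L^{\infty}$-isometry. This map sends $H^{\infty}(\mathbb{C}^{+})$ to $H^{\infty}(\mathbb{D})$ and a rational function with at most $n$ poles in $\mathbb{C}^{+}$, bounded at infinity, to a rational function with at most $n$ poles in $\mathbb{D}$; hence it suffices to prove the analogous statement on the disk for $\tilde{\varphi}(\omega):=\varphi(\lambda(\omega))$.

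On the circle the classical Jackson theorem gives a trigonometric polynomial $T_{n}(\omega)=\sum_{k=-n}^{n}c_{k}\omega^{k}$ with $\|\tilde{\varphi}-T_{n}\|_{L^{\infty}(\mathbb{T})}\lesssim\|\tilde{\varphi}^{(m)}\|_{\infty}/n^{m}$. The key observation is that $T_{n}$ already lies in $\mathcal{R}_{n}(\mathbb{D})+H^{\infty}(\mathbb{D})$: writing $T_{n}=T_{n}^{+}+T_{n}^{-}$ with $T_{n}^{+}=\sum_{k=0}^{n}c_{k}\omega^{k}$ and $T_{n}^{-}=\sum_{k=1}^{n}c_{-k}\omega^{-k}$, the first is a polynomial hence in $H^{\infty}(\mathbb{D})$, while the second is rational with a single pole at $\omega=0$ of multiplicity at most $n$. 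Pulling back, $\omega=0$ corresponds to $\lambda=i\in\mathbb{C}^{+}$, so $T_{n}^{-}$ yields an element of $\mathcal{R}_{n}$ while $T_{n}^{+}$ gives an element of $H^{\infty}(\mathbb{C}^{+})$, and the Jackson bound transfers back to $\mathbb{R}$.

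The main obstacle is that the Cayley map is singular at $\omega=1$ (the preimage of $\lambda=\infty$); the chain rule shows $\tilde{\varphi}^{(j)}$ picks up blow-up factors $(1-\omega)^{-2j}$, so $\|\tilde{\varphi}^{(m)}\|_{\infty}$ need not be controlled by $\|\varphi^{(m)}\|_{\infty}$ unless the derivatives of $\varphi$ decay suitably at $\pm\infty$. To cover general $\varphi\in C^{m}(\mathbb{R})$ I would instead argue directly on $\mathbb{R}$: convolve $\varphi$ with a de la Vall\'ee--Poussin-type kernel $K_{n}$ whose Fourier transform is supported in $[-n,n]$ and equals $1$ on $[-n/2,n/2]$, obtaining by $m$ integrations by parts the Jackson estimate $\|K_{n}*\varphi-\varphi\|_{\infty}\lesssim\|\varphi^{(m)}\|_{\infty}/n^{m}$. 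One then splits the bandlimited approximant via the Riesz projections $\widetilde{\mathbb{P}}_{\pm}$ of Section~3 into an $H^{\infty}(\mathbb{C}^{+})$-part (Paley--Wiener) and a negative-frequency remainder, and identifies the latter with an element of $\mathcal{R}_{n}$, up to an error of the same order, by expanding in the local Cayley variable $(\lambda-i)/(\lambda+i)$ at $\lambda=i$ and truncating after $n$ monomials. The hardest step is controlling this last truncation uniformly in $\lambda\in\mathbb{R}$; having already established the disk version of the bound in the previous paragraph, the estimate there can be imported locally near $\lambda=i$ to supply precisely the required $O(n^{-m})$ control.
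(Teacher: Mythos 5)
The paper offers no proof of this theorem---it is quoted as a classical result and then invoked only through Lemma \ref{prop on sn}---so there is no argument of the authors to compare yours against; I judge the proposal on its own. Your first paragraph is the standard and correct proof of the \emph{circle} version: Jackson on $\mathbb{T}$ followed by the splitting $T_{n}=T_{n}^{+}+T_{n}^{-}$, with $T_{n}^{-}$ rational with a single pole of order at most $n$ at the center of the disk, is exactly how one bounds $\operatorname{dist}_{L^{\infty}}\bigl(\tilde{\varphi},\mathcal{R}_{n}+H^{\infty}\bigr)$ for $\tilde{\varphi}\in C^{m}(\mathbb{T})$. You are also right that the Cayley transfer destroys control of $\Vert\tilde{\varphi}^{(m)}\Vert_{\infty}$ for a general $\varphi\in C^{m}(\mathbb{R})$, and that this is where the real difficulty sits.

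The gap is in your proposed repair of that difficulty. After the de la Vall\'ee Poussin step you are left with the negative-frequency piece $g_{-}$, a bounded function with spectrum in $[-n,0]$, which you must place in $\mathcal{R}_{n}+H^{\infty}(\mathbb{C}^{+})$ up to an error $O(n^{-m})$. Truncating its expansion in the Cayley variable after $n$ monomials does produce an element of $\mathcal{R}_{n}$, but nothing controls the truncation error: partial sums of the Taylor series of a bounded analytic function need not even be uniformly bounded, and ``importing the disk estimate locally'' is circular, since that estimate requires precisely the $C^{m}(\mathbb{T})$ control you have just conceded is unavailable. In fact no repair can exist, because the statement as literally written is false. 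Take $\varphi(\lambda)=\sin\lambda=\frac{1}{2i}\bigl(e^{i\lambda}-e^{-i\lambda}\bigr)$. Since $e^{i\lambda}\in H^{\infty}(\mathbb{C}^{+})$, we get $\mathbb{H}(\sin\lambda)=-\frac{1}{2i}\mathbb{H}(e^{-i\lambda})$, and $\mathbb{H}(e^{-i\lambda})$ is a partial isometry whose initial space is the infinite-dimensional model space $H^{2}\ominus e^{i\lambda}H^{2}$ (the Paley--Wiener space of $[0,1]$); hence it has infinitely many singular values equal to $1$. By Theorem \ref{AAK}, $\operatorname{dist}_{L^{\infty}}\bigl(\sin\lambda,\mathcal{R}_{n}+H^{\infty}(\mathbb{C}^{+})\bigr)=\frac{1}{2}$ for every $n$, while the claimed bound is $O(n^{-m})$. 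The theorem is true only under an extra hypothesis forcing good behavior at $\lambda=\infty$---for instance that $\varphi$ pulls back to a $C^{m}$ function on the circle, or that $\varphi^{(j)}(\lambda)=O(|\lambda|^{-j})$---and that is how it is actually used here: the symbols in Lemma \ref{prop on sn} are Cauchy transforms of $L^{1}$ functions, analytic in a strip and decaying at infinity together with all derivatives, so your first-paragraph argument, with the Cayley distortion estimated explicitly for those symbols, is the complete proof of what the paper needs.
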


Theorems \ref{AAK} and \ref{Jackson} immediately yield the following observation.

\begin{lemma}
\label{prop on sn}Let $f\in L^{1}\left(  \mathbb{R}\right)  $, $h>0$ and%
\begin{equation}
\varphi(k)=\int_{\mathbb{R}}\frac{f(s)}{s-k+ih}ds. \label{small fi}%
\end{equation}
Then%
\[
s_{n}\left(  \mathbb{H}\left(  \varphi\right)  \right)  \lesssim\left(
2/h\right)  \left\Vert f\right\Vert _{1}\exp\left\{  -\left(  h/2\right)
n\right\}  .
\]

\end{lemma}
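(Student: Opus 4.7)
The plan is to combine the Adamyan--Arov--Krein identity (Theorem \ref{AAK}) with Jackson's rational-approximation estimate (Theorem \ref{Jackson}), applied to $\varphi$ viewed as a very smooth function on $\mathbb{R}$, and then to optimize in the smoothness index.

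First I would verify by differentiation under the integral sign that, for every integer $m\geq 0$,
\[
\varphi^{(m)}(k) \;=\; m!\int_{\mathbb{R}} \frac{f(s)}{(s-k+ih)^{m+1}}\,ds, \qquad k\in\mathbb{R},
\]
which is justified by dominated convergence because $|s-k+ih|\geq h$ and $f\in L^{1}(\mathbb{R})$. The same estimate immediately gives the pointwise bound
\[
\bigl\|\varphi^{(m)}\bigr\|_{\infty} \;\leq\; \frac{m!\,\|f\|_{1}}{h^{\,m+1}}.
\]
In particular $\varphi \in C^{\infty}(\mathbb{R})$ with explicitly controlled derivatives.

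Next I would apply Theorems \ref{AAK} and \ref{Jackson} in succession: for every $m\geq 0$,
\[
s_{n}\bigl(\mathbb{H}(\varphi)\bigr)
\;=\; \operatorname{dist}\nolimits_{L^{\infty}}\!\bigl(\varphi,\mathcal{R}_{n}+H^{\infty}(\mathbb{C}^{+})\bigr)
\;\lesssim\; \frac{\|\varphi^{(m)}\|_{\infty}}{n^{m}}
\;\leq\; \frac{m!\,\|f\|_{1}}{h^{\,m+1}\,n^{m}}.
\]
Because this bound holds for every $m$, the problem reduces to choosing $m=m(n)$ optimally. Stirling gives $m!\lesssim (m/e)^{m}\sqrt{m}$, hence
\[
\frac{m!}{(hn)^{m}} \;\lesssim\; \Bigl(\frac{m}{e\,hn}\Bigr)^{\!m}\sqrt{m}.
\]

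The natural critical point $m\approx hn$ would produce decay $\sim e^{-hn}$; however, to swallow the polynomial prefactor $\sqrt{m}$ (and any $m$-dependent constant hidden in the $\lesssim$ of Theorem \ref{Jackson}) one can afford to be slightly sub-optimal and take $m=\lfloor hn/2\rfloor$. With this choice $m/(e\,hn)\le 1/(2e)$, so
\[
\frac{m!\,\|f\|_{1}}{h^{\,m+1}\,n^{m}}
\;\lesssim\; \frac{\|f\|_{1}}{h}\,(2e)^{-m}\sqrt{m}
\;\lesssim\; \frac{2\|f\|_{1}}{h}\,e^{-hn/2},
\]
which is exactly the stated bound. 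The main obstacle I anticipate is purely bookkeeping: one must confirm that the constant implicit in Theorem \ref{Jackson} grows at most exponentially in $m$ (in fact, slower than $2^{m}$), so that it can be absorbed by the factor $2^{-hn/2}$ produced by dropping from the critical rate $e^{-hn}$ to $e^{-hn/2}$. This is precisely why the lemma is formulated with the apparently wasteful exponent $h/2$ rather than $h$: the factor of two provides exactly the slack required for both the Stirling polynomial correction and the $m$-dependence of the Jackson constant.
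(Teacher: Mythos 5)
Your proposal is correct and follows essentially the same route as the paper: the identical derivative bound $\|\varphi^{(m)}\|_{\infty}\le m!\,\|f\|_{1}/h^{m+1}$ fed into Theorems \ref{AAK} and \ref{Jackson}. The only difference is the final elementary step — you optimize by choosing $m\approx hn/2$ and invoking Stirling, whereas the paper multiplies the $m$-th inequality by $(hn/2)^{m}/m!\cdot 2^{-m}$ and sums over $m$ to produce the factor $e^{hn/2}$ — and your explicit attention to the $m$-dependence of the Jackson constant is, if anything, more careful than the paper's.
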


\begin{proof}
Differentiating (\ref{small fi}) one has%
\[
\left\Vert \varphi^{\left(  m\right)  }\right\Vert _{\infty}\leq\left\Vert
f\right\Vert _{1}\frac{m!}{h^{m+1}}%
\]
and hence by Theorems \ref{AAK} and \ref{Jackson} for any $m=0,1,2,...$%
\[
s_{n}\left(  \mathbb{H}\left(  \varphi\right)  \right)  \lesssim
\frac{\left\Vert f\right\Vert _{1}}{h}\frac{m!}{\left(  hn\right)  ^{m}}.
\]
Rewriting the last estimate as $s_{n}\left(  \mathbb{H}\left(  \varphi\right)
\right)  \dfrac{m!}{\left(  2hn\right)  ^{m}}\lesssim\dfrac{\left\Vert
f\right\Vert _{1}}{h}2^{-m}$ and summing up on $m$ implies the desired result.
\end{proof}

Here is the main result of this section

\begin{theorem}
[Properties of the IST Hankel operator]\label{th9.5} Under Hypothesis
\ref{hyp1.1} the IST Hankel operator $\mathbb{H}(x,t)$ is well-defined and has
the properties: for any $x\in\mathbb{R},\;t>0$

\begin{enumerate}
\item[(1)] $\mathbb{H}(x,t)$ is selfadjoint,

\item[(2)] $\mathbb{H}(x,t)$ is compact and its singular numbers $s_{n}\left(
\mathbb{H}\left(  x,t\right)  \right)  $ satisfy%
\begin{equation}
s_{n}\left(  \mathbb{H}\left(  x,t\right)  \right)  \lesssim\frac{2}%
{h}\left\{  \int_{\mathbb{R}}\left\vert \xi_{x,t}(\lambda+ih)R(\lambda
+ih)\right\vert d\lambda\right\}  \ \exp\left\{  -\frac{2n}{h}\right\}
\label{s-number est}%
\end{equation}
for any $h>0.$

\item[(3)] $\left\Vert \mathbb{H}(x,t)\right\Vert <1.$

\item[(4)] $\partial_{t}^{m}\mathbb{H}\left(  x,t\right)  ,\ m=0,1,$ is an
entire in $x$ operator-valued function.
\end{enumerate}
\end{theorem}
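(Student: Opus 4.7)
My plan is to handle the four claims by reducing to the machinery developed in Section \ref{hankel}, with the contour shift $\mathbb{R}\to\mathbb{R}+ih$ for $h>0$ as the main analytical device throughout. Well-definedness and (1) are essentially immediate: $|\xi_{x,t}|=1$ on $\mathbb{R}$ and Theorem \ref{props of R} supplies $|R|\le 1$, so $\varphi_{x,t}\in L^\infty(\mathbb{R})$ with norm at most $1$, and $\mathbb{H}(\varphi_{x,t})$ is bounded by Definition \ref{def4.1}. The selfadjointness identity $\mathbb{J}\varphi_{x,t}=\overline{\varphi_{x,t}}$ on $\mathbb{R}$ decomposes into $\xi_{x,t}(-k)=\overline{\xi_{x,t}(k)}$, immediate since the phase $8tk^3+2xk$ is odd in $k$, and $R(-k)=\overline{R(k)}$ from Theorem \ref{props of R}. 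Compactness in (2) follows by applying Theorem \ref{thGru01} with $p(k)=8tk^3+2xk$ (odd, with positive leading coefficient $8t>0$) to produce the factorization $\xi_{x,t}=Bu$ with $B$ an infinite Blaschke product in $H^\infty(\mathbb{C}^+)$ and $u\in C(\mathbb{R})$ unimodular; then $\varphi_{x,t}=(BR)u\in H^\infty(\mathbb{C}^+)+C(\mathbb{R})$ by the Sarason algebra property (Theorem \ref{sarason thm}), and Hartman's Theorem \ref{thHart} delivers compactness.

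For the singular-number bound, a direct computation gives $|\xi_{x,t}(\lambda+ih)|=\exp(8th^3-2hx-24th\lambda^2)$, hence $\varphi_{x,t}(\cdot+ih)\in L^1(\mathbb{R})$ for every $h>0$, $t>0$. Since $\varphi_{x,t}$ extends analytically to $\mathbb{C}^+$ and neither pole of the integrand of $\widetilde{\mathbb{P}}_-\varphi_{x,t}$ (at $s=x-i0$ and $s=-i$) lies in the strip $0<\operatorname{Im} z<h$, the contour may be deformed from $\mathbb{R}$ up to $\mathbb{R}+ih$, with vertical contributions at $\pm L$ vanishing as $L\to\infty$ by the Gaussian decay of $|\xi_{x,t}|$. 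Up to an additive $x$-independent constant this yields
\[
(\widetilde{\mathbb{P}}_-\varphi_{x,t})(k)=-\frac{1}{2\pi i}\int_{\mathbb{R}}\frac{\varphi_{x,t}(\lambda+ih)}{\lambda-k+ih}\,d\lambda.
\]
Theorem \ref{th4.5} identifies $\mathbb{H}(\varphi_{x,t})$ with $\mathbb{H}(\widetilde{\mathbb{P}}_-\varphi_{x,t})$, and Lemma \ref{prop on sn} applied to $f(s):=-(2\pi i)^{-1}\varphi_{x,t}(s+ih)$ then delivers \eqref{s-number est}. Part (3) splits by dichotomy: if $|R|=1$ a.e.\ then $R\in H^\infty(\mathbb{C}^+)+C(\mathbb{R})$ is unimodular and Theorem \ref{Thm 5.6'} applies with $u:=R$; otherwise $\varphi_{x,t}$ is a non-unimodular element of $H^\infty+C$ with $\|\varphi_{x,t}\|_\infty\le 1$ and $\mathbb{J}\varphi_{x,t}=\overline{\varphi_{x,t}}$, and Theorem \ref{rem5.8} applies.

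The real work lies in part (4), because for complex $x=x_1+ix_2$ with $x_2\neq 0$ the symbol $\varphi_{x,t}$ is no longer bounded on $\mathbb{R}$ (via $e^{2ikx}$), so $\mathbb{H}(x,t)$ is not directly covered by Definition \ref{def4.1}. The idea is that the shifted integral representation from part (2) survives analytic continuation: the estimate $|\xi_{x,t}(\lambda+ih)|\lesssim\exp(-24th\lambda^2+c(x,t,h))$ extends to complex $x$ with $c$ locally bounded, and $\xi_{x,t}(\lambda+ih)$ is entire in $x$ pointwise in $\lambda$, so the symbol defined by the shifted Cauchy integral is entire in $x$ at each $k$, with $L^1$ bounds locally uniform on compact subsets of $x$. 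Consequently, the singular-number estimate of part (2) propagates to complex $x$ with locally bounded constants, making $\mathbb{H}(x,t)$ trace class uniformly on compacta. A Morera/dominated-convergence argument then upgrades pointwise analyticity of the symbol to operator-norm analyticity of the Hankel operator. The $m=1$ case is analogous after differentiating under the integral, which inserts a factor $8i(\lambda+ih)^3$, still absorbed by the Gaussian.
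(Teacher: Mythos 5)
Your proposal is correct and follows essentially the same route as the paper: the contour shift to $\mathbb{R}+ih$ combined with Theorem \ref{th4.5} and Lemma \ref{prop on sn} for part (2), the dichotomy between Theorems \ref{Thm 5.6'} and \ref{rem5.8} for part (3), and analytic continuation of the shifted symbol $\Phi_{z,t}$ for part (4). The only (harmless) variation is that you establish compactness via the Grudsky factorization and Sarason's algebra property rather than reading it off directly from the continuity of $\Phi_{x,t}$, a step the paper itself uses implicitly when invoking Theorem \ref{rem5.8}.
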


\begin{proof}
Statement (1) is obvious as $\mathbb{J}\varphi_{x,t}=\overline{\varphi_{x,t}}%
$. We now prove statement (2). By Theorem \ref{th4.5} ($\varphi_{x,t}%
=\xi_{x,t}R$)%
\[
\mathbb{H}(x,t)=\mathbb{H}(\varphi_{x,t})=\mathbb{H}(\widetilde{\mathbb{P}%
}_{-}\varphi_{x,t})
\]
where%
\begin{equation}
\left(  \widetilde{\mathbb{P}}_{-}\varphi_{x,t}\right)  (k)=-\frac{1}{2\pi
i}\int_{\mathbb{R}}\left(  \frac{1}{\lambda-(k-i0)}-\frac{1}{\lambda
+i}\right)  \varphi_{x,t}(\lambda)\ d\lambda. \label{principal part}%
\end{equation}
The function $\varphi_{x,t}$ is clearly analytic in $\mathbb{C}^{+}$. Since
$R\in H^{\infty}\left(  \mathbb{C}^{+}\right)  $ with $\left\Vert R\right\Vert
_{\infty}\leq1$ and $\xi_{x,t}(\lambda+ih)$ rapidly decays as $\lambda
\rightarrow\pm\infty$ for any $t>0$ and arbitrary $h>0$, we can deform the
contour of integration in (\ref{principal part}) to $R+ih,\ h>0$. Thus%
\[
\left(  \widetilde{\mathbb{P}}_{-}\varphi_{x,t}\right)  (k)=-\frac{1}{2\pi
i}\int_{\mathbb{R}+ih}\frac{\varphi_{x,t}(\lambda)}{\lambda-k}\ d\lambda
-\int_{\mathbb{R}+ih}\frac{\varphi_{x,t}(\lambda)}{\lambda+i}\ d\lambda.
\]
Since the last term is a constant, we conclude that%
\begin{equation}
\mathbb{H}(x,t)=\mathbb{H}\left(  \Phi_{x,t}\right)  \label{h=H}%
\end{equation}
with an entire function%
\begin{equation}
\Phi_{x,t}(k):=-\frac{1}{2\pi i}\int_{\mathbb{R}}\frac{\xi_{x,t}%
(\lambda+ih)R(\lambda+ih)}{\lambda-k+ih}\ d\lambda. \label{Phi}%
\end{equation}
By Theorem \ref{thHart} operator $\mathbb{H}(x,t)$ is compact. By Lemma
\ref{prop on sn} yields (\ref{s-number est}).

Statement (3) follows from Theorem \ref{rem5.8} if $\left\vert R\left(
\lambda\right)  \right\vert <1$ on a set of positive Lebesgue measure, or
Theorem \ref{Thm 5.6'} if $\left\vert R\left(  \lambda\right)  \right\vert =1$ a.e.

It remains to prove statement (4). One can easily see from the straightforward
formula ($t>0$)%
\begin{equation}
\left\vert \xi_{z,t}(\lambda+ih)\right\vert =\exp\left\{  8h^{3}%
t-2h\operatorname{Re}z+\frac{\operatorname{Im}^{2}z}{24ht}-\left(  \sqrt
{24ht}\lambda+\frac{\operatorname{Im}z}{\sqrt{24ht}}\right)  ^{2}\right\}
\label{ksi}%
\end{equation}
and (\ref{Phi}) that $\partial_{t}^{m}\Phi_{z,t}(k)$ is well-defined for any
complex $z$. I.e. it is also entire in $z$ for any $t>0$. Therefore the
operator-valued function $\partial_{t}^{m}\mathbb{H}\left(  x,t\right)  $
defined by%
\[
\partial_{t}^{m}\mathbb{H}\left(  x,t\right)  =\mathbb{H}\left(  \partial
_{t}^{m}\Phi_{x,t}\right)
\]
is also entire.
\end{proof}

We conclude this section with a few remarks.

\begin{remark}
\label{rem9.6}Statement (3) of Theorem \ref{th9.5} says that $(\mathbb{I}%
+\mathbb{H}(x,t))^{-1}$ is a bounded operator on $H^{2}\left(  \mathbb{C}%
^{+}\right)  $ for any $x\in\mathbb{R}$ and $t>0$, which is of course of a
particular importance for validation of the IST. A weaker versions of this
theorem (stated in different terms)\ was proven in \cite{GruRyPAMS13} (which
in turn improved \cite{Ryb10}).
\end{remark}

\section{Main Results\label{main section}}

In this section we finally state and prove our main results. With all the
preparations done in the previous sections, the actual proof will be quite short.

Note that while the interest to well-posedness of integrable systems has been
generated by the progress in soliton theory, well-posedness issues are
typically approached by means of PDEs techniques \cite{Tao06} (norm estimates,
etc.) and the IST is not usually employed. In soliton theory, in turn,
well-posedness is commonly assumed (frequently even by default) and one
applies the IST method to study the unique solution to (\ref{KdV}) or any
other integrable system. The paper \cite{KapTop06} represents a rather rare
example where the complete integrability of (\ref{KdV}) with periodic initial
data was used in a crucial way to prove some subtle well-posedness results for
irregular $q$ which are not accessible by harmonic analysis means. In our case
neither a priori well-posedness nor IST are readily available and we have to
deal with both at the same time.

Solutions of the KdV can be understood in a number of different ways
\cite{Tao06} (classical, strong, weak, etc.) resulting in a variety of
different well-posedness results.

\begin{definition}
[Natural solution]\label{WP} We call $q\left(  x,t\right)  $ a global natural
solution to (\ref{KdV}) if for any sequence of $C_{0}^{\infty}\left(
\mathbb{R}\right)  $ potentials $\left\{  q_{n}\left(  x\right)  \right\}  $
converging to $q\left(  x\right)  $ in $H_{\operatorname*{loc}}^{-1}\left(
\mathbb{R}\right)  $, the corresponding sequence of (classical) solutions
$\left\{  q_{n}\left(  x,t\right)  \right\}  $ to (\ref{KdV}) with initial
data $q_{n}\left(  x\right)  $ converges to $q\left(  x,t\right)  $ for any
$t>0$ uniformly in $x$ on compacts of $\mathbb{R}$.
\end{definition}

Our definition is a stronger version of that in \cite{KapTop06}. It also looks
quite natural from the computational and physical point of view. Another
feature of Definition \ref{WP} is that existence implies
uniqueness\footnote{and certain continuous dependence on the initial data
which we don't discuss here.}

\begin{theorem}
[Main Theorem]\label{MainThm} Assume that the initial data $q$ in (\ref{KdV})
is subject to Hypothesis \ref{hyp1.1}. Then the Cauchy problem (\ref{KdV}) has
a global natural solution $q(x,t)$ (Definition \ref{WP}) given by%
\begin{equation}
q(x,t)=-2\partial_{x}^{2}\log\det\left(  1+\mathbb{H}(x,t)\right)  ,
\label{det_form}%
\end{equation}
where $\mathbb{H}(x,t)$ is the IST Hankel operator associated with $q$
(Definition \ref{def IST HO}). The solution $q(x,t)$ has no singularities and
admits a meromorphic continuation $q(z,t)$ to the whole $\mathbb{C}$ with no
poles in parabolic domains%
\begin{equation}
D\left(  \delta,t\right)  :=\left\{  z:\frac{\operatorname{Im}^{2}z}%
{12}<\delta\operatorname{Re}z-\delta^{2}+\frac{\sqrt{\delta t}}{4}\log\frac
{t}{\delta^{3}}\right\}  \label{dom}%
\end{equation}
for any $t,\delta>0$.
\end{theorem}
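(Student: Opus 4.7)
The plan is to establish \eqref{det_form} by approximation from $C_0^\infty$ data, where the formula is the classical Dyson--Bargman identity, and then to promote the real-variable result to the meromorphic statement using the entire-operator statement of Theorem \ref{th9.5}(4) together with the singular value estimate \eqref{s-number est}.

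First, using the Miura representation $q=B(r)=r'+r^{2}$ with $r\in L_{\mathrm{loc}}^{2}$ and $r|_{\mathbb{R}_+}=0$ (Kappeler--Perry--Topalov), I pick $r_n\in C_0^\infty$ with $\mathrm{supp}\,r_n\subset(-\infty,0]$ and $r_n\to r$ in $L_{\mathrm{loc}}^2$. Then $q_n:=B(r_n)\in C_0^\infty$, $q_n|_{\mathbb{R}_+}=0$, $\mathbb{L}_{q_n}\ge0$, and a brief estimate shows $q_n\to q$ in $H_{\mathrm{loc}}^{-1}$. For such smooth rapidly-decaying $q_n$, the classical IST yields a global smooth solution $q_n(x,t)$ given by \eqref{det_form} with the IST Hankel operator $\mathbb{H}_n(x,t)$ built from the reflection coefficient $R_n$ of $q_n$.

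Next, I will show $\mathbb{H}_n(x,t)\to\mathbb{H}(x,t)$ in trace norm, locally uniformly in $(x,t)$ with $t>0$. By Theorem \ref{props of R}, $R_n\to R$ uniformly on compact subsets of $\mathbb{C}^+$, in particular on the line $\mathbb{R}+ih$ for any fixed $h>0$. By Theorem \ref{th4.5} and the contour deformation performed in the proof of Theorem \ref{th9.5}, $\mathbb{H}_n(x,t)-\mathbb{H}(x,t)=\mathbb{H}(\Phi_{n,x,t}-\Phi_{x,t})$ where
\[
\Phi_{n,x,t}(k)-\Phi_{x,t}(k)=-\frac{1}{2\pi i}\int_{\mathbb{R}}\frac{\xi_{x,t}(\lambda+ih)\bigl[R_n-R\bigr](\lambda+ih)}{\lambda-k+ih}\,d\lambda.
\]
The Gaussian decay of $\xi_{x,t}(\,\cdot\,+ih)$ and $|R_n|,|R|\le1$ let me apply dominated convergence to conclude that the $L^1$ norm of the numerator tends to zero; Lemma \ref{prop on sn} then gives an exponentially decaying bound on the singular values of $\mathbb{H}(\Phi_{n,x,t}-\Phi_{x,t})$ whose sum is controlled by this vanishing $L^1$ norm, delivering trace-norm convergence. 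Consequently $\det(1+\mathbb{H}_n(x,t))\to\det(1+\mathbb{H}(x,t))$, and since $\|\mathbb{H}(x,t)\|<1$ by Theorem \ref{th9.5}(3) the limit is bounded away from $0$. The same argument applied to $\partial_x^m\Phi$ (obtained by differentiating under the integral, which is legitimate thanks to the Gaussian weight) shows that the two $x$-derivatives in \eqref{det_form} commute with the $n\to\infty$ limit. Thus $q_n(x,t)$ converges, locally uniformly in $x$, to the function on the right of \eqref{det_form}, identifying it as a global natural solution in the sense of Definition \ref{WP}.

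Finally I address the meromorphic statement. By Theorem \ref{th9.5}(4), $z\mapsto\mathbb{H}(z,t)$ is an entire trace-class-valued function, so $F(z,t):=\det(1+\mathbb{H}(z,t))$ is entire in $z$. Hence $-2\partial_z^2\log F(z,t)$ is a meromorphic continuation of $q(x,t)$ with poles precisely at the zeros of $F(\cdot,t)$. To exclude zeros inside $D(\delta,t)$, I use $|F(z,t)|\ge\prod_{n\ge1}(1-s_n(\mathbb{H}(z,t)))$ together with the singular-value bound \eqref{s-number est}: using $|R|\le1$ on $\mathbb{R}+ih$ and the explicit formula \eqref{ksi}, one obtains $s_n(\mathbb{H}(z,t))\lesssim h^{-3/2}t^{-1/2}\exp\{8h^3t-2h\mathrm{Re}\,z+\mathrm{Im}^2 z/(24ht)-2n/h\}$. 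Choosing the contour parameter $h=\sqrt{\delta/(4t)}$ matches the coefficients of $\mathrm{Re}\,z$ and $\mathrm{Im}^2 z$ with those appearing in \eqref{dom}, and the exact form of $D(\delta,t)$ is precisely the condition that makes even $s_1(\mathbb{H}(z,t))<1$, hence $F(z,t)\ne0$ throughout $D(\delta,t)$. The main obstacle I expect is bookkeeping in this last estimate: tracking the implicit constants in \eqref{s-number est} well enough to match the explicit logarithmic correction $(\sqrt{\delta t}/4)\log(t/\delta^3)$ in \eqref{dom}; the rest of the proof is structural.
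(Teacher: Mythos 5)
Your proposal follows the paper's proof essentially step for step: smooth Miura-type approximants, the classical Dyson formula for them, trace-norm convergence $\mathbb{H}_{n}(x,t)\to\mathbb{H}(x,t)$ obtained from Theorem \ref{props of R} together with the contour deformation to $\mathbb{R}+ih$ and Lemma \ref{prop on sn}, and a pole-free domain extracted from a bound on $\Vert\mathbb{H}(z,t)\Vert$ along that shifted contour. The one point where you deviate is the final estimate: the paper does not route through the AAK/Jackson bound (\ref{s-number est}), whose unspecified implicit constant indeed cannot yield the explicit domain (\ref{dom}), but instead uses the elementary inequality $\Vert\mathbb{H}(z,t)\Vert\leq\Vert\Phi_{z,t}\Vert_{\infty}\leq\frac{1}{2\pi h}\int_{\mathbb{R}}\left\vert\xi_{z,t}(\lambda+ih)\right\vert d\lambda$, evaluates the Gaussian integral exactly via (\ref{ksi}), and sets $\delta=4h^{2}t$ --- this removes the bookkeeping obstacle you flag while keeping your choice of $h$. (The paper additionally substitutes $q=q_{n}+\Delta q_{n}$ into (\ref{KdV}) to check that the residual vanishes, whereas you rely only on the convergence demanded by Definition \ref{WP}; given that definition, your shortcut is legitimate.)
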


\begin{proof}
Let $\left\{  q_{n}\left(  x\right)  \right\}  $ be any real $C_{0}^{\infty
}\left(  \mathbb{R}\right)  $ sequence converging to $q\left(  x\right)  $ in
$H_{\operatorname*{loc}}^{-1}\left(  \mathbb{R}\right)  $. Without loss of
generality we may assume that $q_{n}\left(  x\right)  \,$is of form
(\ref{Miura transform}). The problem (\ref{KdV}) with initial data
$q_{n}\left(  x\right)  $ is classical and its (unique) classical solution
$q_{n}\left(  x,t\right)  $ and can be computed by the Dyson formula%
\[
q_{n}\left(  x,t\right)  =-2\partial_{x}^{2}\log\det\left(  \mathbb{I}%
+\mathbb{H}_{n}(x,t)\right)  ,
\]
where $\mathbb{H}_{n}(x,t)$ is the IST Hankel operator corresponding to
$q_{n}$. By Theorem \ref{th9.5}, $q_{n}\left(  x,t\right)  $ is a meromorphic
function in $x$ on the entire complex plane. Consider the function $q\left(
x,t\right)  $\ given by (\ref{det_form}). By Theorem \ref{th9.5}, it is well
defined and entire in $x$ for any $t>0$. It remains to prove that $q\left(
x,t\right)  =\lim q_{n}\left(  x,t\right)  ,\ n\rightarrow\infty$, solves
(\ref{KdV}). Inserting $q=q_{n}+\Delta q_{n}$ into (\ref{KdV}) one gets%
\begin{align}
&  \partial_{t}q-6q\partial_{x}q+\partial_{x}^{3}q\label{rhs}\\
&  =\partial_{t}\Delta q_{n}+3\partial_{x}\left[  \left(  \Delta
q_{n}-2q\right)  \Delta q_{n}\right]  +\partial_{x}^{3}\Delta q_{n}.\nonumber
\end{align}
For $\Delta q_{n}$ we have (dropping subscript $x,t$)%
\[
\Delta q_{n}=-2\partial_{x}^{2}\log\det\left(  \mathbb{I}-\left(
\mathbb{I}+\mathbb{H}\right)  ^{-1}\left(  \mathbb{H-H}_{n}\right)  \right)  .
\]
It follows form (\ref{h=H}) and (\ref{Phi}) that for the symbol $\ \Delta
\Phi_{n}$ of $\mathbb{H-H}_{n}$ we have%
\begin{equation}
\Delta\Psi_{n}(k)=\frac{1}{2\pi i}\int_{\mathbb{R}}\xi_{x,t}(\lambda
+ih)\frac{R_{n}(\lambda+ih)-R(\lambda+ih)}{\lambda-k+ih}d\lambda.\label{delta}%
\end{equation}
But, by Theorem \ref{props of R} $R_{n}$ $\rightarrow R$ uniformly on compacts
in $\mathbb{C}^{+}$ as $n\rightarrow\infty$ and we can easily conclude that
$\partial_{t}^{m}\partial_{x}^{l}\left(  \mathbb{H-H}_{n}\right)  $ vanishes
in the trace norm as $n\rightarrow\infty$. Therefore $\partial_{t}^{m}%
\partial_{x}^{l}\Delta q_{n}\rightarrow0,n\rightarrow\infty$ and the right
hand side of (\ref{rhs}) vanishes.

By Theorem \ref{th9.5} $q\left(  x,t\right)  $ is meromorphic in $x$ for any
$t>0$ with all poles off the real line. It remains to show that it has no
poles in the domain (\ref{dom}). It follows from (\ref{h=H}) and (\ref{Phi})
that%
\begin{align*}
\left\Vert \mathbb{H}(z,t)\right\Vert  &  \leq\left\Vert \Phi_{z,t}\right\Vert
_{\infty}\\
&  \leq\frac{1}{2\pi h}\int\left\vert \xi_{z,t}(\lambda+ih)\right\vert
d\lambda.
\end{align*}
In virtue of (\ref{ksi})%
\[
\int_{\mathbb{R}}\left\vert \xi_{z,t}(\lambda+ih)\right\vert d\lambda
=\sqrt{\frac{\pi}{24ht}}\exp\left\{  8h^{3}t-2h\operatorname{Re}%
z+\frac{\operatorname{Im}^{2}z}{24ht}\right\}
\]
and hence for any $t,h>0.$%
\begin{equation}
\left\Vert \mathbb{H}(z,t)\right\Vert \leq\sqrt{\frac{1}{24\pi h^{3}t}}%
\exp\left\{  8h^{3}t-2h\operatorname{Re}z+\frac{\operatorname{Im}^{2}z}%
{24ht}\right\}  . \label{last estim}%
\end{equation}
The right hand side of (\ref{last estim}) is less than 1 if $z\in D\left(
\delta,t\right)  $ with $\delta=4h^{2}t$. Since $h$ is arbitrary $\delta$ is
also arbitrary
\end{proof}

In a weaker form for regular initial profiles Theorem \ref{MainThm} was proven
in recent \cite{GruRyPAMS13}. We conclude our paper with some discussions and corollaries.

\begin{remark}
Hypothesis \ref{hyp1.1} does not impose any decay assumption at $-\infty$ or
any type of pattern of behavior. Initial data $q\left(  x\right)  $ could be
unbounded at $-\infty$ or behave like white noise.
\end{remark}

\begin{remark}
We emphasize that our proof is based on limiting arguments and avoids dealing
directly with such common in the classical IST issues as the direct/inverse
scattering problem, time evolution of scattering quantities under the KdV
flow, etc. (see \cite{Hrynivetal2011} and the literature cited therein for
some results relevant to our singular initial data). This is the main
advantage of our approach and we only borrow the fact that the initial
condition is satisfied in $H_{\operatorname*{loc}}^{-1}\left(  \mathbb{R}%
\right)  $ sense \cite{KapPerryTopalov2005}.
\end{remark}

\begin{remark}
The estimate (\ref{s-number est}) means that the determinant in
(\ref{det_form}) rapidly converges. This fact, coupled with the recent
progress in computing Fredholm determinants \cite{Bornemann2010}, suggests
that (\ref{det_form}) could potentially be used for numerical evaluations.
\end{remark}

\begin{remark}
Theorem \ref{MainThm} says that any, no matter how rough, singular initial
profile $q\left(  x\right)  $ instantaneously evolves under the KdV flow into
a meromorphic function $q\left(  x,t\right)  $. This effect, also called
dispersive smoothing, has a long history. While being noticed long ago, its
rigorous proof took quit a bit of effort even for box shaped initial data
\cite{Murray(Cohen)78} (see also \cite{Zhou1997} for other integrable
systems). Note that our solutions are `dispersive', i.e. solutions which
disperse in time and do not have a soliton component.
\end{remark}

\begin{remark}
Since $q\left(  x,t\right)  $ is a meromorphic function on $\mathbb{C}$ for
any $t>0$, it is completely characterized by a countable number of time
dependent parameters. Viewing a pure soliton solution as a meromorphic
function of $x$ goes back to Kruskal. In \cite{Kruskal74} he initiated a study
of pole dynamics which has been quite active since then (see also
\cite{Air77}, \cite{Bona2009}, \cite{Segur2000}, \cite{GWeikard06} to mention
just four). In our soliton free situation we still in general have infinitely
many poles but their nature and behavior are unclear. We so far only know that
all poles are double, non-real, come in complex conjugate pairs, and stay away
from the time dependant domains $D\left(  \delta,t\right)  $ given by
(\ref{dom}). Besides this, some older general results \cite{Steinberg1969} say
that poles depend continuously on $t$ and cannot appear or disappear. We are
unaware of any relevant helpful results from the theory of Hankel operators
which would shed much light on the operator-valued function $\left(
\mathbb{I}+\mathbb{H}\left(  x,t\right)  \right)  ^{-1}$.
\end{remark}

\begin{remark}
As a meromorphic function $q\left(  x,t\right)  $ cannot vanish on a set of
positive Lebesgue measure for any $t>0$ unless $q\left(  x\right)  $ is
identically zero. This simple observation quickly recovers and improves on
many unique continuation results. E.g., $q\left(  x,t\right)  $ cannot have
compact support at two different moments unless it vanishes identically. This
result was first proven in \cite{Zhang92} assuming that $q\left(  x\right)  $
is absolutely continuous and short range. The techniques of \cite{Zhang92}
also rely on the IST\ and some Hardy space arguments.
\end{remark}

\begin{remark}
There is a large variety of determinant formulas similar to (\ref{det_form})
available in the literature. For instance, the substitution $q\left(
x,t\right)  =-2\partial_{x}^{2}\tau\left(  x,t\right)  $ (which goes back to
the seminal paper \cite{Hirota71}) is commonly used as an ansatz to reduce the
KdV equation to the so-called bilinear KdV which is advantageous in some
situations. Formulas like (\ref{det_form}) are particularly convenient for
describing classes of exact solutions (see, e.g. \cite{Ma05}) and $\tau\left(
x,t\right)  $ typically appears as a Wronskian. We also refer to
\cite{ErcMcKean90}, \cite{NovikovetalBook}, \cite{Popper84}, and
\cite{Venak86} for (\ref{det_form}) in the context of the Cauchy problem for
the KdV. Under our conditions on the initial data (\ref{det_form}) is new.
\end{remark}

\begin{remark}
It can be easily shown that $q\left(  x,t\right)  $ decays exponentially fast
in the region $x\geq Ct$ for any $C>0$.
\end{remark}

\section{Acknowledgement}

We are grateful to Rostislav Hryniv for valuable discussions.

\end{document}